\DeclareMathOperator*{\argmin}{arg\,min}
\begin{document}
	
\title{Some Adaptive First-order Methods for Variational Inequalities with Relatively Strongly Monotone Operators and Generalized Smoothness\thanks{The work of F. Stonyakin and A. Gasnikov was supported by the strategic academic leadership program <<Priority 2030>> (Agreement  075-02-2021-1316, 30.09.2021).}}


\author{Seydamet S. Ablaev\inst{1, 2} \orcidID{0000-0002-9927-6503} \and 
Alexander A. Titov\inst{2,3} \orcidID{0000-0001-9672-0616} \and 
Fedor S. Stonyakin \inst{1,2} \orcidID{0000-0002-9250-4438} \and
‪Mohammad S. Alkousa\inst{2,3} \orcidID{0000-0001-5470-0182}  \and  
Alexander V. Gasnikov \inst{2,3,4} \orcidID{0000-0002-5982-8983}}
	
\authorrunning{S. Ablaev et al.}
\titlerunning{Adaptive Methods for Variational Inequalities}

\institute{V.\,I.\,Vernadsky Crimean Federal University, Simferopol, Russia
\and 
Moscow Institute of Physics and Technology, Moscow, Russia
\and 
HSE University, Moscow, Russia
\and 
Caucasus Mathematical Center, Adyghe State University, Maikop, Russia\\
\email{seydamet.ablaev@yandex.ru, a.a.titov@phystech.edu, mohammad.alkousa@phystech.edu, fedyor@mail.ru, gasnikov@yandex.ru}
}
\maketitle              
\begin{abstract}
In this paper, we introduce some adaptive methods for solving variational inequalities with relatively strongly monotone operators. Firstly, we focus on the modification of the recently proposed, in smooth case \cite{Sidford}, adaptive numerical method for generalized smooth (with H\"older condition) saddle point problem,  which has convergence rate estimates similar to accelerated methods. We provide the motivation for such an approach and obtain theoretical results of the proposed method. Our second focus is the adaptation of widespread recently proposed methods for solving variational inequalities with relatively strongly monotone operators. The key idea in our approach is the refusal of the well-known restart technique, which in some cases causes difficulties in implementing such algorithms for applied problems. Nevertheless, our algorithms show a comparable rate of convergence with respect to algorithms based on the above-mentioned restart technique. Also, we present some numerical experiments, which demonstrate the effectiveness of the proposed methods.
\keywords{Saddle point problem. H\"older continuity. Variational inequality. Restart technique. Strongly convex programming problem.}
\end{abstract}

\section*{Introduction}\label{sec1_introduction}
Non-smooth convex optimization plays a key role in solving the vast majority of modern applied problems \cite{Bubeck,Cheng,Clarke,Nesterov_2005}. In this paper, we focus on non-smooth saddle point problems and variational inequalities, which, as can be easily shown, are closely related to each other \cite{Stampacchia,Stonyakin_JOTA}. Such settings of the optimization problems naturally arise when considering problems in machine learning \cite{Dauphin,Jin}, data science \cite{Stampacchia}, economic systems \cite{Buiter}, optimal transport \cite{Dafermos}, network equilibrium \cite{Friesz}, game theory \cite{Mertikopoulos_2018}, general equilibrium theory \cite{Cherukuri,Grandmont}, etc.

Remind the problem of solving Minty variational inequality. For a given operator $g: X \rightarrow \mathbb{R}^n$, where $X$ is a closed convex subset of some finite-dimensional vector space, we need to find a vector $x_*\in X$, such that
\begin{equation}\label{VI}
  \langle g(x),x_*-x\rangle\leqslant 0, \quad \forall x\in X.
\end{equation}

The operator $g$ is called $L$-smooth, if for any $x, y, z \in X$ the following inequality holds
\begin{equation}\label{rel_smooth}
    \langle g(y)-g(z),x-z\rangle \leqslant LV(x,z) + LV(z,y),
\end{equation}
where $V(\cdot, \cdot)$ is the distance in some generalized sense, namely, Bregman divergence (see \eqref{Bregmann_def}, below).
We also assume, that the operator $g$ is $\mu$--relatively strongly monotone, i.e.
	\begin{equation}\label{eq:rel_str_mon}
	 	\mu V(y, x) + \mu V(x, y) \leqslant \langle g(y) - g(x), y - x \rangle, \quad  {\color{black} \forall  x,y \in X.}
	 \end{equation}

The concept of relative strong monotonicity is a natural generalization of the concept of relative strong convexity of the objective functional \cite{Lu_nest} {\color{black} in optimization problems,} for variational inequalities.

We are motivated by the following saddle point problem
\begin{equation}\label{main1}
\min\limits_{x}\max\limits_{y} f(x,y).
\end{equation} 

Using the recently proposed new paradigm of convex optimization \cite{Bauschke,Nesterov_2019}, namely, relative smoothness condition, there was proposed a technique \cite{Sidford}, which provides the possibility of the acceleration of numerical methods for solving saddle point problems, assuming, that the gradient of the objective function $\nabla f(x,y)$ satisfies Lipschitz condition. Moreover, the proposed method is adaptive with respect to all Lipschitz constants of the objective's gradient.

In this paper, we extend the considered class of saddle point problems and replace the classical Lipschitz continuity condition
\begin{equation}
\|\nabla f(z)-\nabla f(u)\| \leqslant L\|z-u\|,
\end{equation}
by the following H\"older continuity condition
\begin{equation}\label{holder}
\|\nabla f(z)-\nabla f(u)\| \leqslant L_{\nu}\|z-u\|^{\nu},
\end{equation}
{\color{black} $z = (x_z, y_z), u = (x_u, y_u)$,} with respect to the gradient of the objective, 
where $0\leqslant\nu\leqslant1$. Hereinafter we consider arbitrary non-Euclidean norms{\color{black}, which are} defined on the corresponding spaces unless otherwise stated.

Note, that the concept of H\"older continuity is an extremely important generalization of the Lipschitz continuity condition. A huge number of applied problems can be formulated exclusively on the class of minimization of H\"older continuous functionals, e.g. smooth multi-armed bandit problem \cite{Liu}, detecting heart rate variability \cite{Nakamura}, etc.
Also, if some function is uniformly convex, then its conjugated will necessarily have the H\"older-continuous gradient, according to \cite{Nesterov_2015}.

Based  on the recently proposed restart technique of the Universal Proximal Method for solving variational inequalities \cite{Gasnikov}, we propose algorithms, which ensure the $\varepsilon$--approximate solution of the considered problem \eqref{VI} after no more than
\begin{equation}
N = \left \lceil \frac{2L\Omega}{\mu}\log_2\frac{R_0^2}{\varepsilon} \right \rceil
\end{equation}
iterations, where $\mu$ denotes the constant of relative strong monotonicity of $g$, $\Omega$ and $R_0$ are specified in Algorithm \ref{Alg:RUMP} and can be understood as  some characteristics of the domain of the {\color{black} operator $g$.} 

The paper consists of an introduction and four main sections. In Sect. \ref{section_Saddle_Point_Problem}, we discuss approach \cite{Sid,Sidford} to the accelerated rates of first-order methods for strongly convex-concave saddle point problems basing on relative smoothness and strong monotonicity conditions. Moreover, we consider generalizations of smoothness conditions for saddle point problems. In Sect. \ref{RestartUMP}, we propose adaptive version of restarted Mirror Prox method \cite{Inex} for generalized smooth problems. Further, in Sect. \ref{sect:adapt_wth}, we propose some adaptive methods, which do not imply the restart technique, but have the similar convergence rate estimates of the proposed algorithm with restart technique. {\color{black}In Sect. \ref{sect_numerical}, we present some numerical experiments for the saddle point problem and Minty variational inequality, which demonstrate the effectiveness of the proposed methods.}

The contributions of the paper can be formulated as follows.
\begin{itemize}
	\item We consider the non-smooth strongly convex-concave saddle point problem and propose a restarted version of the Universal Proximal Method for the corresponding variational inequality, which {\color{black}guarantees the} $\varepsilon$--approximate solution of the problem \eqref{VI} in an optimal rate of $N = \left \lceil \frac{2L\Omega}{\mu}\log_2\frac{R_0^2}{\varepsilon}\right \rceil$ iterations. These algorithms are of interest in case of a huge value of the condition number $\frac{L}{\mu}$ as well as in the case of considering not strongly convex-concave saddle point problems.
	
	\item We propose methods beyond the restart technique and show, that in some cases they may have even better estimates of the convergence rate compared to methods, based on the restart technique. Moreover, the required number of iterations of such algorithms does not exceed $N = \left \lceil \frac{L+\mu}{\mu}\log_2\frac{R_0^2}{\varepsilon}\right \rceil$.
	
	\item We present some numerical experiments, which demonstrate the effectiveness of the proposed methods.

\end{itemize}

We start with some auxiliaries. Let $E$ be a finite-dimensional vector space and $E^*$ be its dual. Let us choose some norm $\| \cdot \|$ on $E$. Define the dual norm $\|\cdot \|_*$ as follows
\begin{equation}
    \|\phi\|_{*} = \max\limits_{\|x\|\leqslant 1} \{ \langle \phi, x \rangle\},
\end{equation}
where $\langle \phi, x \rangle$ denotes the value of the linear function $\phi \in E^*$ at the point $x\in E$.

Let us choose some prox-function $d(x)$, which is continuously differentiable and convex on $E$, and define the corresponding Bregman divergence as follows
\begin{equation}\label{Bregmann_def}
    V(y,x) = V_d(y,x) = d(y) - d(x) - \langle \nabla d(x), y - x \rangle, \quad \forall x, y \in E.
\end{equation}
The Bregman divergence can be understood as some generalization of the distance in the considered set.

\section{Towards Adaptive Accelerated Rates for Saddle Point Problems with Generalized Smoothness Condition} \label{section_Saddle_Point_Problem}

Let $Q_x \subset \mathbb{R}^n$ and $Q_y \subset \mathbb{R}^m$ be nonempty, convex and compact sets.  
Consider the following saddle point problem

\begin{equation}\label{main}
\min\limits_{x\in Q_x}\max\limits_{y\in Q_y} \{f(x,y)+h(x)-g(x)\},
\end{equation} 
where  $f(x,y): Q_x\times Q_y\rightarrow\mathbb{R}$ is convex function for fixed $y \in Q_y$ and concave for fixed $x\in Q_x$, functions $h(x)$ and $g(y)$ are convex on $Q_x$ and $Q_y$,  respectively, and for each $x{\color{black}, x'} \in Q_x, y{\color{black}, y'} \in Q_y${\color{black}, we have}
$$
\| \nabla h(x) - \nabla h(x')\|_* \leqslant L_{x}\|x-x'\|, \quad 
\| \nabla g(y) - \nabla g(y')\|_* \leqslant L_{y}\|y-y'\|,
$$
{\color{black} for some $L_x > 0, L_y > 0$.}

\begin{remark}
If $f(x,y): Q_x\times Q_y\rightarrow\mathbb{R}$ is strongly convex function for fixed $y \in Q_y$ and strongly concave for fixed $x\in Q_x$, we can consider the problem \eqref{main} with $h(x) = \frac{\|x\|^2}{2}$ and  $g(y) = \frac{\|y\|^2}{2}$.
\end{remark}

Let us consider the following setting of the problem \eqref{main}. Suppose, for any $x,x' \in Q_x,\  y, y'\in Q_y$, $L_{xx} > 0, L_{yy} >0, L_{xy} > 0$, and for some $\nu \in [0,1]$, the following inequalities hold

\if 0
\begin{equation} \label{L_xx}
\| \nabla_x f(x,y) - \nabla_xf(x',y)\|_2 \leqslant L_{xx}\|x-x'\|_2^{\nu},
\end{equation} 
\begin{equation} \label{L_xy}
\| \nabla_xf(x,y) - \nabla_xf(x,y')\|_2 \leqslant L_{xy}\|y-y'\|_2^{\nu},
\end{equation} 
\begin{equation} \label{L_yx}
\| \nabla_yf(x,y) - \nabla_yf(x',y)\|_2 \leqslant L_{xy}\|x-x'\|_2^{\nu},
\end{equation} 
\begin{equation} \label{L_yy}
\| \nabla_yf(x,y) - \nabla_yf(x,y')\|_2 \leqslant L_{yy}\|y-y'\|_2^{\nu},
\end{equation} 
\fi 

\begin{equation} \label{Lxx_Lxy}
\| \nabla_xf(x,y) - \nabla_xf(x',y')\|_* \leqslant L_{xx}\|x-x'\|^{\nu} + L_{xy}\|y-y'\|^{\nu},
\end{equation} 
\begin{equation} \label{Lxy_Lyy}
\| \nabla_yf(x,y) - \nabla_yf(x',y')\|_* \leqslant L_{xy}\|x-x'\|^{\nu} + L_{yy}\|y-y'\|^{\nu}.
\end{equation} 

Let $\omega_{x}, \omega_{y}$ be some 1-strongly convex function w.r.t. $\|\cdot\|_{Q_x},\|\cdot\|_{Q_y}$ respectively, $\alpha: Q_{x}^{*} \rightarrow \mathbb{R}$ be the convex conjugate of $\frac{\|x\|^2}{2}-\mu_{x} \omega_{x}$, and $\beta: {Q_y}^{*} \rightarrow \mathbb{R}$ be the convex conjugate of $\frac{\|y\|^2}{2}-\mu_{y} \omega_{y}$. Then we can consider the following saddle point problem
\begin{equation}\label{mainmod}
\min\limits_{\substack{x \in Q_x,\\ b \in Q_y^*}} \max\limits_{\substack{y \in Q_y,\\ a \in Q_x^*}}\Big\{\langle a, x\rangle+\langle b, y\rangle+\mu_{x} \omega_{x}(x)-\mu_{y} \omega_{y}(y)+f(x, y)-\alpha(a)+\beta(b)\Big\}.
\end{equation}

It is shown \cite{Sidford}, that if $\left(x^{*}, y^{*}, a^{*}, b^{*}\right)$ is the saddle point to \eqref{mainmod}, then $\left(x^{*}, y^{*}\right)$ is the saddle point to \eqref{main}. Also,
$\alpha$ is $\frac{1}{L_{x}}$-strongly convex in $\|\cdot\|_{{Q_x}, *}$, and $\beta$ is\\ $\frac{1}{L_{y}}$-strongly convex in $\|\cdot\|_{{Q_y},*}$ \cite{Sidford}.

\begin{lemma}\label{lemma_1}
Define the following operator ($(x, y) \in Q_x \times Q_y := X$)
\begin{equation*}
\begin{aligned}
g(x,y,a,b)=\Big( a+\mu_{x} \nabla \omega_{x}(x)+ &\nabla_{x} f(x, y),-b+\mu_{y} \nabla \omega_{y}(y)-\nabla_{y} f(x, y), \\& \qquad \qquad \qquad \qquad \qquad -x+\nabla \alpha(a), y+\nabla \beta(b) \Big),
\end{aligned}
\end{equation*}

where $f$ satisfies \eqref{Lxx_Lxy}-\eqref{Lxy_Lyy}. Consider the following prox-function 
$$
    d(x,y,a,b)=\mu_{x} \omega_{x}(x)+\mu_{y} \omega_{y}(y)+\alpha(a)+\beta(b),
$$ 
and the corresponding Bregman divergence, defined according to \eqref{Bregmann_def}. Then $g$ is $1$-relatively strongly monotone, i.e. 
    \begin{equation}\label{eq2}
	 	 \langle g(y) - g(x), y - x \rangle \geqslant  V(y, x) +  V(x, y),
    \end{equation}
and generalized relatively smooth operator, i.e.
    \begin{equation}\label{eq3}
    \langle g(y)-g(z),y-x\rangle \leqslant LV(y,z) + LV(x,y) +\delta,
    \end{equation}
for some $\delta > 0$, with  
\begin{equation}\label{Lconst}
L=\widetilde{L}(\delta) = \left(\frac{2}{\delta}\right)^{\frac{1-\nu}{1+\nu}}\left(\frac{L_{xx}^{\frac{2}{1+\nu}}}{\mu_x} + \frac{L_{xy}^{\frac{2}{1+\nu}}}{\sqrt{\mu_x\mu_y}}+ \frac{L_{yy}^{\frac{2}{1+\nu}}}{\mu_y}\right).
\end{equation}
\end{lemma}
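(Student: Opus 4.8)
\emph{Setup and decomposition.} Throughout I write a generic point of the extended domain $Z:=Q_x\times Q_y\times Q_x^{*}\times Q_y^{*}$ as $v=(v_x,v_y,v_a,v_b)$, so that \eqref{eq2} reads $\langle g(v)-g(u),v-u\rangle\geqslant V(v,u)+V(u,v)$ and \eqref{eq3} reads $\langle g(v)-g(w),v-u\rangle\leqslant LV(v,w)+LV(u,v)+\delta$. The plan is to split $g=\nabla d+S+F$, where $\nabla d=(\mu_x\nabla\omega_x(v_x),\mu_y\nabla\omega_y(v_y),\nabla\alpha(v_a),\nabla\beta(v_b))$ is the gradient of the prox-function, $S(v)=(v_a,-v_b,-v_x,v_y)$ is a skew-symmetric linear operator, and $F(v)=(\nabla_xf(v_x,v_y),-\nabla_yf(v_x,v_y),0,0)$ collects the H\"older part; one checks directly that $S+F$ is exactly the monotone operator of the convex--concave function $\langle a,x\rangle+\langle b,y\rangle+f(x,y)$ (convex in $(x,b)$, concave in $(y,a)$). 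I will use repeatedly the two Bregman identities $V(v,w)+V(w,v)=\langle\nabla d(v)-\nabla d(w),v-w\rangle$ and the three-point identity $\langle\nabla d(v)-\nabla d(w),v-u\rangle=V(v,w)+V(u,v)-V(u,w)$, both immediate from \eqref{Bregmann_def}, together with the lower bounds $\|v_x-w_x\|^{2}\leqslant\tfrac{2}{\mu_x}V(v,w)$, $\|v_y-w_y\|^{2}\leqslant\tfrac{2}{\mu_y}V(v,w)$, $\|v_a-w_a\|_{*}^{2}\leqslant 2L_xV(v,w)$, $\|v_b-w_b\|_{*}^{2}\leqslant 2L_yV(v,w)$ (and likewise for any ordered pair of points), which follow from the strong convexity of the four blocks of $d$ and the separability of $V$.

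\emph{Relative strong monotonicity \eqref{eq2}.} Since $\langle g(v)-g(u),v-u\rangle=\langle\nabla d(v)-\nabla d(u),v-u\rangle+\langle(S+F)(v)-(S+F)(u),v-u\rangle$, the first term equals $V(v,u)+V(u,v)$ by the first identity above, so it suffices to show $\langle(S+F)(v)-(S+F)(u),v-u\rangle\geqslant0$. The $S$-part contributes $\langle S(v-u),v-u\rangle=0$ by skew-symmetry, and the $F$-part is nonnegative because it is the standard monotonicity inequality for the saddle-point operator of the convex--concave $f$: add the subgradient inequalities for $f(\cdot,v_y)$ and $f(\cdot,u_y)$ and the supergradient inequalities for $f(v_x,\cdot)$ and $f(u_x,\cdot)$, evaluated at the two points, and the mixed terms cancel.

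\emph{Generalized relative smoothness \eqref{eq3}.} Bound the three pieces of $\langle g(v)-g(w),v-u\rangle$ separately (here $v,w$ are the points denoted $y,z$ and $u$ the point denoted $x$ in the statement). For $\nabla d$ the three-point identity gives $V(v,w)+V(u,v)-V(u,w)$; keep the negative term $-V(u,w)$. For the skew part, use skew-symmetry to rewrite $\langle S(v-w),v-u\rangle=\langle S(v-w),w-u\rangle$, apply Cauchy--Schwarz block-by-block (pairing the $a$-block with the $x$-block and the $b$-block with the $y$-block), and convert the factors into Bregman divergences via the strong-convexity bounds; choosing the Young weight so that the coefficient of $V(u,w)$ is $1$, this whole term is swallowed by $-V(u,w)$, leaving only a contribution of order $\tfrac{L_x}{\mu_x}+\tfrac{L_y}{\mu_y}$ to the coefficient of $V(v,w)$, which is dominated by $\widetilde L(\delta)$ under the standing normalization $\mu_x,\mu_y,L_x,L_y\leqslant1\leqslant L_{xx},L_{xy},L_{yy}$. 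For the H\"older part, \eqref{Lxx_Lxy}--\eqref{Lxy_Lyy} and Cauchy--Schwarz give a sum of products of the shape $L_{\bullet\bullet}\,s^{\nu}t$ with $s\in\{\|v_x-w_x\|,\|v_y-w_y\|\}$ and $t\in\{\|v_x-u_x\|,\|v_y-u_y\|\}$. To each such product apply Young's inequality: for $\nu<1$, $L_{\bullet\bullet}\,s^{\nu}t\leqslant\tfrac{C}{2}(s^{2}+t^{2})+c_{\nu}\,C^{-\frac{1+\nu}{1-\nu}}L_{\bullet\bullet}^{\frac{2}{1-\nu}}$ for any $C>0$ (with the weighted analogue, rescaling the two variables by $\sqrt{\mu_x}$ and $\sqrt{\mu_y}$, for the two mixed $L_{xy}$-products, which is what produces the geometric mean $(\mu_x\mu_y)^{-1/2}$); then replace $s^{2},t^{2}$ by the corresponding $\tfrac{2}{\mu_\bullet}$-multiples of $V(v,w)$ and $V(u,v)$. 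Choosing $C$ block-by-block so that the quadratic coefficients become $\big(\tfrac{2}{\delta}\big)^{\frac{1-\nu}{1+\nu}}\tfrac{L_{xx}^{2/(1+\nu)}}{\mu_x}$, $\big(\tfrac{2}{\delta}\big)^{\frac{1-\nu}{1+\nu}}\tfrac{L_{xy}^{2/(1+\nu)}}{\sqrt{\mu_x\mu_y}}$, $\big(\tfrac{2}{\delta}\big)^{\frac{1-\nu}{1+\nu}}\tfrac{L_{yy}^{2/(1+\nu)}}{\mu_y}$ multiplied against $V(v,w)+V(u,v)$ makes the residual terms add up to at most $\delta$, and summing the three contributions yields \eqref{eq3} with $L=\widetilde L(\delta)$. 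The case $\nu=1$ is checked separately: the residual vanishes, $\widetilde L(\delta)$ becomes $\delta$-free, and one recovers the relatively-Lipschitz estimate of \cite{Sidford}.

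\emph{Main obstacle.} The delicate step is the last one: one must tune the block parameters $C$ so that the two $L_{xy}$-products genuinely combine into the single $(\mu_x\mu_y)^{-1/2}$-weighted term, so that the residuals total at most $\delta$ with the precise prefactor $\big(\tfrac{2}{\delta}\big)^{\frac{1-\nu}{1+\nu}}$, and so that the lower-order contributions from $\nabla d$ (the additive $V(v,w)+V(u,v)$) and from the bilinear coupling $S$ are absorbed — either into $-V(u,w)$ or into $\widetilde L(\delta)$ via the normalization $\mu_x,\mu_y,L_x,L_y\leqslant1\leqslant L_{xx},L_{xy},L_{yy}$ — rather than appearing as extra terms in the constant. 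The rest of the argument is routine manipulation of Bregman divergences.
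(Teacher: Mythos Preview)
Your treatment of the H\"older part $F$ is exactly what the paper does: Cauchy--Schwarz on each $\nabla_x f,\nabla_y f$ difference, then the Young/Nesterov ``universal'' inequality $L_{\bullet\bullet}s^{\nu}t\leqslant C\big(\tfrac{\mu_\bullet}{2}s^{2}+\tfrac{\mu_\bullet}{2}t^{2}\big)+\tfrac{\delta}{4}$ with $C=\big(\tfrac{2}{\delta}\big)^{\frac{1-\nu}{1+\nu}}L_{\bullet\bullet}^{2/(1+\nu)}/\mu_\bullet$, block by block, and then the strong-convexity lower bounds on $V$ to convert the squared norms into Bregman divergences. That is literally the entire content of the proof the paper gives (in the arXiv appendix); the paper does not prove \eqref{eq2} separately, and it does not treat the $\nabla d+S$ contribution at all --- its first displayed inequality simply drops those terms and bounds $\langle g(y)-g(z),y-x\rangle$ by the $\nabla f$ terms alone, tacitly relying on \cite{Sidford} for everything else. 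Your argument for \eqref{eq2} is correct and is strictly more than what the paper writes down.

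Where you diverge from the paper is in trying to control the bilinear block $S$ by hand via Cauchy--Schwarz and then invoking a ``standing normalization'' $\mu_x,\mu_y,L_x,L_y\leqslant 1\leqslant L_{xx},L_{xy},L_{yy}$ to absorb the resulting $L_x/\mu_x+L_y/\mu_y$ into $\widetilde L(\delta)$. That normalization is \emph{not} assumed in the lemma, so as stated your route does not give \eqref{eq3} with the exact constant \eqref{Lconst}. The paper sidesteps this entirely by inheriting the non-H\"older portion from \cite{Sidford}: in the Lipschitz case $\nu=1$ Jin--Sidford--Tian already show that the full operator $g$ (with the specific Fenchel-conjugate coupling between $(\omega_x,\alpha)$ and $(\omega_y,\beta)$) is relatively Lipschitz, so only the $\nabla f$ block changes when one passes to $\nu<1$, and the paper bounds just that block. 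In other words, the ``main obstacle'' you flag is real for your self-contained route, but the paper simply does not attempt it --- it treats the $\nabla d+S$ part as a black box from \cite{Sidford} and proves only the H\"older reduction. If you want a clean self-contained version, replace your Cauchy--Schwarz-plus-normalization step by a direct citation of the $\nu=1$ relative-Lipschitz bound from \cite{Sidford} for $\nabla d+S$; the $O(1)$ it contributes is then dominated by $\widetilde L(\delta)$ for $\nu<1$ (since $\widetilde L(\delta)\to\infty$ as $\delta\to 0$), which is the level of precision the paper is working at.
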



\begin{proof}
The proof is given in arXiv preprint \cite{arxiv_version}.
\end{proof}

Hence, considered variational inequalities with relatively strongly monotone and generalized relatively smooth operators {\color{black}allow} one to obtain first-order method complexity estimates for the corresponding class of strongly convex-convex saddle point problems, which are similar to the accelerated methods  \cite{Sidford}. Moreover, using the artificial inaccuracy, Lemma \ref{lemma_1} extends this approach to saddle {\color{black}point} problems with generalized smoothness conditions \cite{Stonyakin_JOTA,Inex}.

However, extending the class of problems, one can potentially encounter the problem of a large value of $\widetilde{L}(\delta)$. On the other hand, even while considering the smooth case for saddle point problems, it may be difficult to estimate all the 5 parameters {\color{black} $\mu_x$, $\mu_y$}, $L_{xx}$, $L_{xy}$ and $L_{yy}$. Motivated by this and starting from the methodology of Y.~E.~Nesterov's works \cite{Devolder,Nesterov_2013,Nesterov_2015}, we propose methods allowing the adaptively selection of the corresponding values of these parameters.

\section{Adaptive Restarted Mirror Prox for Variational Inequalities with Relative Strongly Monotone Operators}
\label{RestartUMP}

Recently \cite{Inex}, there was proposed an adaptive universal algorithm (listed as Algorithm \ref{Alg:UMP}, below), which can automatically adjust to the smoothness level of the operator $g$.

\begin{algorithm}[htp]
\caption{Universal Mirror Prox for Variational Inequalities \cite{Inex}.}
\label{Alg:UMP}
\begin{algorithmic}[1]
   \REQUIRE $\varepsilon > 0$, $\delta >0$, $x_0 \in X$, initial guess $L_0 >0$, prox-setup: $d(x)$, $V(x,z)$.
   \STATE Set $k=0$, $z_0 = \arg \min_{u \in X} d(u)$.
   \REPEAT
			\STATE Find the smaller $i_k\geqslant  0$, such that
			\begin{equation*}
			\begin{aligned}
			    \langle g(z_k),z_{k+1}-z_k\rangle & \leqslant \langle g(w_k),z_{k+1}-w_k\rangle + \langle g(z_k),w_{k}-z_k\rangle +  \\ &\qquad \qquad \qquad   +L_{k+1}(V(w_k,z_k) + V(z_{k+1},w_k)) +\delta,
			    \end{aligned}
			\end{equation*}
			
			where $L_{k+1} = 2^{i_k-1}L_k$, and 
    			\begin{equation*}
    			     w_k = \arg\min\limits_{x \in X}\{\langle g(z_k),x-z_k\rangle + L_{k+1}V(x,z_k)\},
    			\end{equation*}
    				\begin{equation*}
    			     z_{k+1} = \arg\min\limits_{x \in X}\{\langle g(w_k),x-w_k\rangle + L_{k+1}V(x,z_k)\}.
    			\end{equation*}
			\UNTIL{
		        $S_N := \sum\limits_{k=0}^{N-1}\frac{1}{L_{k+1}}\geqslant  \frac{\max_{x\in X}V(x,x_0)}{\varepsilon}.$
			}
		\ENSURE $z_N$.
\end{algorithmic}
\end{algorithm} 
\if 0
In \cite{Inex}, it was proved the following result that gives the convergence rate of Algorithm \ref{Alg:UMP}.
\fi

\begin{theorem}[\cite{Inex}]
Let $g$ be a monotone operator, {\color{black}$z_N$} be  the output of Algorithm \ref{Alg:UMP}{\color{black} after $N$ iterations}. Then the following inequality holds
\begin{equation}
\langle g(x_*),{\color{black}z_N} - x_*\rangle \leqslant -\frac{1}{S_N}\sum\limits_{k=0}^{N-1}\frac{\langle g(w_k),x_*-w_k \rangle}{L_{k+1}} \leqslant \frac{2L V(x_*,z_0)}{N}.
\end{equation}
Moreover, the total number of iterations does not exceed
\begin{equation}
    N =  \left \lceil \frac{2L}{\varepsilon} \cdot \max\limits_{x\in X}V(x_0,x) \right \rceil. 
\end{equation}
\end{theorem}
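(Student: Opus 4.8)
The plan is to carry out the classical Mirror Prox telescoping argument, exploiting the adaptive line-search condition precisely so that all intermediate Bregman terms cancel. First I would write the first-order optimality conditions for the two prox-steps producing $w_k$ and $z_{k+1}$; combining them with the three-point identity $\langle\nabla d(b)-\nabla d(a),c-b\rangle=V(c,a)-V(c,b)-V(b,a)$ gives, for every $u\in X$,
\[
\langle g(z_k),\,w_k-z_{k+1}\rangle\leqslant L_{k+1}\bigl(V(z_{k+1},z_k)-V(z_{k+1},w_k)-V(w_k,z_k)\bigr),
\]
\[
\langle g(w_k),\,z_{k+1}-u\rangle\leqslant L_{k+1}\bigl(V(u,z_k)-V(u,z_{k+1})-V(z_{k+1},z_k)\bigr).
\]
Here the first inequality is the optimality of the $w_k$-step tested at $u=z_{k+1}$, and the second is the optimality of the $z_{k+1}$-step tested at an arbitrary $u$.

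Next I would rearrange the inner-loop stopping condition into $\langle g(w_k),w_k-z_{k+1}\rangle\leqslant\langle g(z_k),w_k-z_{k+1}\rangle+L_{k+1}\bigl(V(w_k,z_k)+V(z_{k+1},w_k)\bigr)+\delta$ and substitute the first displayed inequality: the terms $V(w_k,z_k)$ and $V(z_{k+1},w_k)$ cancel, leaving $\langle g(w_k),w_k-z_{k+1}\rangle\leqslant L_{k+1}V(z_{k+1},z_k)+\delta$. Adding this to the second displayed inequality via $\langle g(w_k),w_k-u\rangle=\langle g(w_k),w_k-z_{k+1}\rangle+\langle g(w_k),z_{k+1}-u\rangle$, the $\pm L_{k+1}V(z_{k+1},z_k)$ terms cancel, and dividing by $L_{k+1}$ yields $\dfrac{\langle g(w_k),w_k-u\rangle}{L_{k+1}}\leqslant V(u,z_k)-V(u,z_{k+1})+\dfrac{\delta}{L_{k+1}}$. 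Summing over $k=0,\dots,N-1$, telescoping and discarding $V(u,z_N)\geqslant0$ gives $\sum_{k=0}^{N-1}\tfrac{\langle g(w_k),w_k-u\rangle}{L_{k+1}}\leqslant V(u,z_0)+\delta S_N$.

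Then I would put $u=x_*$, divide by $S_N$, and use monotonicity of $g$ together with the Minty inequality for $x_*$ (so $\langle g(x_*),w_k-x_*\rangle\leqslant\langle g(w_k),w_k-x_*\rangle$ and $\langle g(x_*),w_k-x_*\rangle\geqslant0$); since the point estimated in the statement is the weighted average $\hat z_N=\tfrac1{S_N}\sum_{k=0}^{N-1}\tfrac{w_k}{L_{k+1}}$ (denoted $z_N$ there), linearity of $\langle g(x_*),\cdot\rangle$ gives $\langle g(x_*),z_N-x_*\rangle\leqslant-\tfrac1{S_N}\sum_{k=0}^{N-1}\tfrac{\langle g(w_k),x_*-w_k\rangle}{L_{k+1}}\leqslant\tfrac{V(x_*,z_0)}{S_N}+\delta$, which is the first inequality of the theorem (the residual $\delta$ being the controllable artificial inaccuracy of the type in \eqref{eq3}). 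Finally, the inner-loop condition is implied by the generalized relative smoothness \eqref{eq3} (with $y=w_k$, $z=z_k$, $x=z_{k+1}$) as soon as $L_{k+1}\geqslant L$, so each accepted step satisfies $L_{k+1}\leqslant2L$ and hence $S_N=\sum_{k=0}^{N-1}\tfrac1{L_{k+1}}\geqslant\tfrac{N}{2L}$; this converts $\tfrac{V(x_*,z_0)}{S_N}$ into $\tfrac{2L\,V(x_*,z_0)}{N}$ and shows the stopping test $S_N\geqslant\tfrac{\max_x V(x,x_0)}{\varepsilon}$ is met once $N\geqslant\tfrac{2L}{\varepsilon}\max_x V(x_0,x)$, i.e. after at most $N=\bigl\lceil\tfrac{2L}{\varepsilon}\max_{x\in X}V(x_0,x)\bigr\rceil$ outer iterations.

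The main obstacle is the bookkeeping in the middle step: one must pick the test points in the two prox-optimality inequalities exactly right ($u=z_{k+1}$ in one, $u$ free in the other) and rearrange the line-search inequality just so, so that the three cross terms $V(z_{k+1},z_k)$, $V(z_{k+1},w_k)$, $V(w_k,z_k)$ all cancel and leave a clean telescoping recursion; a secondary point requiring care is verifying the a priori bound $L_{k+1}\leqslant2L$ (and that the inner loop terminates, which is where the artificial inaccuracy $\delta>0$ is used) in order to conclude $S_N\geqslant N/(2L)$.
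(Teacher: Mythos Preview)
Your argument is correct and is precisely the standard Mirror Prox telescoping proof: the paper does not supply its own proof here (the theorem is quoted from \cite{Inex}), but the key per-step inequality you derive, $-\langle g(w_k),u-w_k\rangle\leqslant L_{k+1}V(u,z_k)-L_{k+1}V(u,z_{k+1})+\delta$, is exactly what the authors invoke from \cite{Inex} when proving Lemma~\ref{lemma2}, and your bound $L_{k+1}\leqslant 2L$ and interpretation of the output as the weighted average $\tfrac{1}{S_N}\sum_k w_k/L_{k+1}$ are the intended ones.
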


\begin{lemma}\label{lemma2}
Let $g$ be a relatively strongly monotone operator. For Algorithm \ref{Alg:UMP}, the following $\delta$-decreasing of Bregman divergence takes place
\begin{equation}\label{estimate_UMP}
V(x_*,z_{N})\leqslant V(x_*,z_{0}) +\delta S_N.
\end{equation}
\end{lemma}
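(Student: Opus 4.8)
The plan is to convert one iteration of Algorithm~\ref{Alg:UMP} into a recursion of the form $V(x_*,z_{k+1}) \leqslant V(x_*,z_{k}) + \delta/L_{k+1}$ and then sum over $k$. Everything rests on combining the optimality conditions of the two prox-steps with the adaptive inequality accepted by the line search. First I would write down the standard prox-mapping inequality for the two minimization problems defining $w_k$ and $z_{k+1}$, which both use the same center $z_k$: for every $u \in X$,
\begin{align*}
\langle g(z_k), w_k - u\rangle &\leqslant L_{k+1}\big(V(u,z_k) - V(u,w_k) - V(w_k,z_k)\big),\\
\langle g(w_k), z_{k+1} - u\rangle &\leqslant L_{k+1}\big(V(u,z_k) - V(u,z_{k+1}) - V(z_{k+1},z_k)\big).
\end{align*}
Taking $u=z_{k+1}$ in the first line gives a bound on $\langle g(z_k), w_k - z_{k+1}\rangle$. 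In addition, I would rewrite the condition accepted at iteration $k$ (the body of the REPEAT loop) equivalently as
$$\langle g(z_k) - g(w_k),\, z_{k+1} - w_k\rangle \leqslant L_{k+1}\big(V(w_k,z_k) + V(z_{k+1},w_k)\big) + \delta.$$

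Next I would decompose, for arbitrary $u\in X$,
$$\langle g(w_k), w_k - u\rangle = \langle g(w_k)-g(z_k),\, w_k - z_{k+1}\rangle + \langle g(z_k),\, w_k - z_{k+1}\rangle + \langle g(w_k),\, z_{k+1} - u\rangle,$$
and bound the three summands by the rewritten line-search inequality, the $u=z_{k+1}$ instance of the first prox inequality, and the second prox inequality, respectively. Adding the three bounds, the terms $V(w_k,z_k)$, $V(z_{k+1},w_k)$ and $V(z_{k+1},z_k)$ cancel exactly, leaving $\langle g(w_k), w_k - u\rangle \leqslant L_{k+1}\big(V(u,z_k) - V(u,z_{k+1})\big) + \delta$. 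Now set $u = x_*$. Since $x_*$ solves \eqref{VI}, putting $x = w_k$ there gives $\langle g(w_k), w_k - x_*\rangle \geqslant 0$. Dropping this nonnegative quantity and dividing by $L_{k+1}$ yields $V(x_*,z_{k+1}) \leqslant V(x_*,z_k) + \delta/L_{k+1}$; telescoping from $k=0$ to $N-1$ and recalling $S_N = \sum_{k=0}^{N-1} 1/L_{k+1}$ produces exactly \eqref{estimate_UMP}.

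The main obstacle is essentially bookkeeping: one must verify that the three Bregman divergences entering with opposite signs cancel, which works precisely because both prox-steps are centered at $z_k$ and the line-search inequality has the form written above — a mismatched argument or sign breaks the telescoping. It is worth noting that monotonicity of $g$ is already enough for \eqref{estimate_UMP}; the relative strong monotonicity hypothesis only matters later, when one wishes to retain rather than discard the extra term $\mu\big(V(w_k,x_*)+V(x_*,w_k)\big)$ coming from \eqref{eq:rel_str_mon} on the left-hand side.
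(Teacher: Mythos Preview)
Your proof is correct and follows essentially the same route as the paper's: derive the one-step inequality $\langle g(w_k), w_k - u\rangle \leqslant L_{k+1}(V(u,z_k)-V(u,z_{k+1}))+\delta$, specialize $u=x_*$, drop the nonnegative left side, and telescope. The only differences are cosmetic: the paper cites this one-step inequality from \cite{Inex} rather than deriving it via your three-term decomposition, and the paper reaches $\langle g(w_k), w_k - x_*\rangle \geqslant 0$ by first passing from the Minty solution to a Stampacchia solution (using relative strong monotonicity) and then invoking monotonicity, whereas you read it off the Minty condition \eqref{VI} directly --- so your closing remark that strong monotonicity is not really needed here is well taken.
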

\begin{proof}
The proof is given in arXiv preprint \cite{arxiv_version}.
\end{proof}

The following Algorithm \ref{Alg:RUMP} provides the possibility of the acceleration of the proposed Algorithm \ref{Alg:UMP} for solving variational inequality with relatively strongly monotone operator.

\begin{algorithm}[htp]
	\caption{Restarted version of Algorithm \ref{Alg:UMP}.}
	\label{Alg:RUMP}
	\begin{algorithmic}[1]
		\REQUIRE $\varepsilon > 0$, $\mu >0$, $\Omega$ : $d(x) \leqslant \frac{\Omega}{2} \ \forall x\in X: \|x\| \leqslant 1$; $x_0,\; R_0 \ : V(x_*,x_0) \leqslant R_0^2.$
		\STATE $p=0,d_0(x)=R_0^2d\left(\frac{x-x_0}{R_0}\right)$.
		\REPEAT
		\STATE $x_{p+1}$ --- output of Algorithm \ref{Alg:UMP} with prox function $d_{p}(\cdot)$ and stopping criterion $S_N :=\sum_{i=0}^{N-1}L_{i+1}^{-1}\geqslant  \frac{\Omega}{\mu}$.
		\STATE $R_{p+1}^2 = \frac{\Omega R_0^2}{2^{(p+1)}\mu S_{N_p}}$.
		\STATE $d_{p+1}(x) \leftarrow R_{p+1}^2d\left(\frac{x-x_{p+1}}{R_{p+1}}\right)$.
		\STATE $p=p+1$.
		\UNTIL			
		$p > \log_2\left(\frac{2R_0^2}{\varepsilon}\right).$	
		\ENSURE $x_p$.
	\end{algorithmic}
\end{algorithm}

\begin{theorem}\label{th2}
Let $g$ be a {\color{black}generalized relatively smooth \eqref{eq3}} and $\mu$-relatively strongly monotone operator. Then for the output point $x_p$ of the Algorithm \ref{Alg:RUMP}, it will be hold:  
$V(x_*,x_p)\leqslant \varepsilon+ \frac{2 \Omega L\delta}{\mu^2}$. Moreover, the total number of iterations of Algorithm \ref{Alg:RUMP} does not exceed 
\begin{equation}\label{num_iter_alg2}
N = \left\lceil\frac{2L\Omega}{\mu} \cdot \log_2\frac{R_0^2}{\varepsilon}\right\rceil.
\end{equation}
\end{theorem}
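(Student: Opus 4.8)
The plan is to prove Theorem~\ref{th2} by analyzing the restart loop of Algorithm~\ref{Alg:RUMP} one outer iteration at a time, showing that each restart roughly halves an upper bound on $V(x_*, x_p)$ up to an accumulating error term controlled by $\delta$. The two workhorses are Lemma~\ref{lemma2} (the $\delta$-inexact decrease $V(x_*, z_N) \leqslant V(x_*, z_0) + \delta S_N$ for Algorithm~\ref{Alg:UMP} on a relatively strongly monotone operator) and the scaling behaviour of the Bregman divergence under the rescaled prox-function $d_p(x) = R_p^2 d\!\left(\frac{x - x_p}{R_p}\right)$.

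First I would set up the induction. Assume that at the start of outer iteration $p$ we have $V_{d_p}(x_*, x_p) \leqslant R_p^2$ (true for $p=0$ by the input assumption $V(x_*, x_0) \leqslant R_0^2$, since $d_0 = R_0^2 d((x - x_0)/R_0)$ reproduces $V$ up to the stated normalization). Inside the call to Algorithm~\ref{Alg:UMP} the stopping criterion is $S_{N_p} = \sum_{i} L_{i+1}^{-1} \geqslant \Omega/\mu$, so $S_{N_p}$ is of order $\Omega/\mu$. Now I combine two facts about the inner run: (i) Lemma~\ref{lemma2} gives $V_{d_p}(x_*, x_{p+1}) \leqslant V_{d_p}(x_*, x_p) + \delta S_{N_p}$; but more importantly, (ii) relative strong monotonicity lets one bootstrap the basic Mirror-Prox convergence bound $\langle g(x_*), z_N - x_*\rangle \leqslant \frac{2L V_{d_p}(x_*, x_p)}{N_p}$ together with $\mu$-relative strong monotonicity to obtain a \emph{contraction}: something like $V_{d_p}(x_*, x_{p+1}) \leqslant \frac{1}{\mu S_{N_p}} V_{d_p}(x_*, x_p) + \delta S_{N_p}$, which with the stopping rule $S_{N_p} \geqslant \Omega/\mu$ and $\Omega \geqslant 1$ (from $d(x) \leqslant \Omega/2$ on the unit ball, so the prox-function is genuinely $1$-strongly convex only if $\Omega \geqslant 1$) gives the per-restart shrinkage factor $\leqslant \tfrac12$ modulo the $\delta$ term. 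This is exactly the place the definition $R_{p+1}^2 = \frac{\Omega R_0^2}{2^{p+1} \mu S_{N_p}}$ is reverse-engineered to make the induction close: dividing by $R_{p+1}^2$ to translate the bound into the $d_{p+1}$-geometry should again yield $V_{d_{p+1}}(x_*, x_{p+1}) \leqslant R_{p+1}^2$ plus the rescaled error.

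Next I would unroll the recursion on the error. Writing $\rho_p := V_{d_p}(x_*, x_p)$, the inequality $\rho_{p+1} \lesssim \tfrac12 \rho_p + (\text{scaled }\delta\text{ term})$ telescopes; the homogeneous part contributes $2^{-p} R_0^2 \leqslant \varepsilon$ once $p > \log_2(2R_0^2/\varepsilon)$ (which is exactly the outer stopping criterion), and the geometric sum of the inhomogeneous part sums to something bounded by $\frac{2\Omega L \delta}{\mu^2}$ after converting back to the original $V$; careful tracking of the $R_p^2$ factors and the $\Omega/\mu$ lower bound on $S_{N_p}$ is what produces the constant $\frac{2\Omega L \delta}{\mu^2}$ in the statement. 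For the iteration count: each inner run performs $N_p$ iterations with $S_{N_p} = \sum L_{i+1}^{-1}$ just exceeding $\Omega/\mu$, and since each $L_{i+1} \leqslant 2L$ (the adaptive line-search overshoots the true $L$ by at most a factor $2$), each term $L_{i+1}^{-1} \geqslant \frac{1}{2L}$, so $N_p \leqslant \frac{2L\Omega}{\mu} + O(1)$; multiplying by the number of restarts $\lceil \log_2(2R_0^2/\varepsilon)\rceil$ yields the claimed $N = \lceil \frac{2L\Omega}{\mu}\log_2\frac{R_0^2}{\varepsilon}\rceil$.

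The main obstacle I anticipate is establishing the per-restart contraction $\rho_{p+1} \leqslant \tfrac12 \rho_p + \delta S_{N_p}$ cleanly — i.e.\ correctly coupling the $O(1/N_p)$ ergodic rate of Mirror-Prox with $\mu$-relative strong monotonicity and the $\delta$-inexactness, and verifying that the rescaling $d_p \mapsto d_{p+1}$ preserves both the $1$-relative strong monotonicity of $g$ and the generalized relative smoothness with the \emph{same} constant $L$ (so that the inner algorithm's guarantee is reusable unchanged at every restart). A secondary subtlety is the bookkeeping of the additive $\delta$: one must check it does not compound geometrically (which it does not, precisely because the contraction factor is $\tfrac12 < 1$), so the total error stays $O(\Omega L \delta / \mu^2)$ rather than blowing up with the number of restarts. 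Once the one-step contraction and the scale-invariance of the operator's regularity are in hand, the rest is a routine geometric-series estimate and the iteration count follows from the line-search bound $L_{i+1} \leqslant 2L$.
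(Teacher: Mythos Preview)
Your high-level architecture is right: induction over restarts, a per-restart contraction by a factor $\sim \tfrac12$, and the iteration count via $L_{i+1}\leqslant 2L$ together with the stopping rule $S_{N_p}\geqslant \Omega/\mu$. The genuine gap is in the mechanism for the contraction. The quantity you cite from Theorem~1, $\langle g(x_*), z_N - x_*\rangle \leqslant \frac{2L V(x_*,z_0)}{N}$, does not by itself give control on $V(x_*, z_N)$: relative strong monotonicity yields $\mu V(x_*, z_N)+\mu V(z_N,x_*) \leqslant \langle g(z_N)-g(x_*), z_N-x_*\rangle$, and after using that $x_*$ solves the strong VI you still need a bound on $\langle g(z_N), z_N-x_*\rangle$, for which there is no last-iterate Mirror-Prox estimate. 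So the ``bootstrap'' you describe does not close.

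What the paper actually uses is the \emph{middle} term of Theorem~1, the ergodic bound $-\frac{1}{S_{N_p}}\sum_{k} L_{k+1}^{-1}\langle g(w_k), x_*-w_k\rangle \leqslant \frac{\Omega R_p^2}{2 S_{N_p}}$. Applying $\mu$-relative strong monotonicity \emph{termwise} gives $\langle g(w_k), w_k-x_*\rangle \geqslant \mu V(x_*, w_k)$, hence a bound on the weighted \emph{average} of $V(x_*, w_k)$. The step you are missing is converting this average bound into a last-iterate bound on $V(x_*, x_{p+1})$: this is precisely where Lemma~\ref{lemma2} is used nontrivially, via its one-step form, to lower-bound each $V(x_*, w_k)$ by $V(x_*, w_{N_p-1})$ minus an accumulated $\delta$-correction. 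Summing those corrections produces $\mu V(x_*, x_{p+1}) \leqslant \frac{\Omega R_p^2}{2 S_{N_p}} + N_p\,\delta$, so the additive error per restart is $\frac{N_p\delta}{\mu}\leqslant \frac{2L\Omega\delta}{\mu^2}$ --- not $\delta S_{N_p}$ as you wrote. Your error term $\delta S_{N_p}\sim \delta\Omega/\mu$ being off from $\frac{2\Omega L\delta}{\mu^2}$ by exactly a factor $L/\mu$ is the symptom of not having located this average-to-last-iterate conversion. Once this recursion is in hand, there is in fact no geometric compounding of $\delta$ to worry about: the same absolute term $\frac{2\Omega L\delta}{\mu^2}$ reappears at every restart and simply survives to the final estimate.
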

\begin{proof}
The proof is given in arXiv preprint \cite{arxiv_version}.
\end{proof}

\begin{remark}
As shown above, Algorithm \ref{Alg:RUMP} needs no more than $N = \left\lceil\frac{2L\Omega}{\mu}\cdot \log_2\frac{R_0^2}{\varepsilon}\right \rceil$ iterations to provide a solution of the problem \eqref{main}, $L$ is defined according to \eqref{Lconst}, while the technique, described in \cite{Main} has the following worse complexity estimate
\begin{equation}
    \inf_{\nu \in [0,1]} \left\lceil  \left(\frac{L_{\nu}}{\mu}\right)^{\frac{2}{1+\nu}} \cdot \frac{2^{\frac{2}{1+\nu}}\Omega }{\varepsilon^{\frac{1-\nu}{1+\nu}}} \cdot \log_2 \frac{2 R_0^2}{\varepsilon}\right\rceil,
\end{equation}
where $L_{\nu} = \tilde{L}\left(\frac{\tilde{L}}{2\varepsilon}\frac{(1-\nu)(2-\nu)}{2-\nu} \right)^{\frac{(1-\nu)(1+\nu)}{2-\nu}},
\tilde{L} = \left( L_{xy}\left(\frac{2L_{xy}}{\mu_y}\right)^{\frac{\nu}{2-\nu}}+L_{xx}D^{\frac{\nu-\nu^2}{2-\nu}} \right)$, and $D$ is the diameter of the domain of $f(x,\cdot)$.
\end{remark}

\begin{remark}
The obtained estimate 
\begin{equation}
N = \left\lceil 2\Omega\left(\frac{L_{xx}}{\mu_x} + \frac{L_{xy}}{\sqrt{\mu_x\mu_y}}+ \frac{L_{yy}}{\mu_y}\right)\cdot  \log_2\frac{R_0^2}{\varepsilon}\right\rceil
\end{equation}
is optimal for saddle point problems \eqref{mainmod} with $\nu = 1$.

\end{remark}

\begin{remark}
Note, that $\Omega$ may depend on the dimension of the considered space \cite{Nemir_lect}.
\end{remark}

\section{First-order methods for relatively strongly monotone variational inequalities beyond the restart technique}\label{sect:adapt_wth}

Basing on some recently proposed methods \cite{Sid,Sidford} for VIs with 
strongly relatively monotone operators $g :X \rightarrow\mathbb{R}^n$, we consider  algorithms (see Algorithms \ref{alg2}, \ref{alg1} and \ref{alg3}) without using the restart technique. Similarly to the previous section we consider the case of operators $g$ with the generalized smoothness condition \eqref{eq3}. We improve the quality of the solution, compared to Algorithm \ref{alg2} by reducing $O\left(\frac{\delta}{\mu^2}\right)$ to $O\left(\frac{\delta}{\mu}\right)$, which provides the better convergence rate in case of small value of $\mu$. It is also worth noting, that proposed algorithms do not require knowledge of the parameter $\Omega$.

\if 0
В разделе  \ref{sect:adapt_wth} исследованы аналоги методов из \cite{Sid}, \cite{Sidford} без использования техники рестартов. Рассмотрен случай обобщенной гладкости функционала $g$. Показано, что по сравнению с алгоритмом 2, в оценках теоремы 3 коэффициент при $\delta$ $\frac{1}{\mu}$ содержит $\mu$ только в первой степени, что приводит к лучшей оценке качества решения при малых $\mu$. Также, для алгоритмов, не использующих рестарты, не возникает необходимости в величине $\Omega$ в соответствующих оценках.
\fi

\begin{algorithm}[h!]
\caption{Adaptive first-order method for variational inequalities with $\mu$-relatively strongly monotone operators without restarts.}
\label{alg2}
\begin{algorithmic}[1]
\REQUIRE $\varepsilon>0, \delta >0,  x_0 \in X, L_0 > 0, \mu >0, d(x), V(x, z).$
\STATE Set $z_0 = \argmin_{u\in X} d(u).$
\FOR{$k \geqslant  0 $}
\STATE Find the smallest integer $i_k \geqslant  0$, such that
\begin{equation}\label{eq_alg_2}
\langle g(z_k) - g(w_k), z_{k+1} - w_k \rangle \leqslant L_{k+1} \left (V(w_k, z_k) + V(z_{k+1}, w_k) \right ) + \delta,
\end{equation}
 where $L_{k+1} = 2^{i_k - 1} L_k$, and
\begin{equation}\label{wk_alg2}
    w_k = \argmin\limits_{y\in X} \left \{ \left \langle \frac{1}{L_{k+1}} g(z_k), y \right \rangle  + V(y, z_k)\right \},
\end{equation}
\begin{equation}\label{zk1_alg2}
    z_{k+1} = \argmin\limits_{z \in X}\left \{ \left \langle \frac{1}{L_{k+1}} g(w_k), z \right \rangle + V(z, z_k) + \frac{\mu}{L_{k+1}} V(z, w_k) \right \}.
\end{equation}
\ENDFOR
\ENSURE $z_k$.
\end{algorithmic}
\end{algorithm}
\if 0
\bigskip

For the proposed Algorithm \ref{alg2}, we have the following result.
\fi

\begin{theorem}\label{th4}
Let $g$ be a $\mu$-relatively strongly monotone operator, and $z_*$ be the exact solution of the variational inequality \eqref{VI}. Then for Algorithm \ref{alg2}, the following inequality holds 
\begin{equation}\label{estim_alg4}
\begin{aligned}
V(z_*, z_{k+1}) \leqslant  \prod_{i = 1}^{k+1} \left ( 1 + \frac{\mu}{L_{i}} \right )^{-1} & V(z_*, z_0)  + \frac{\delta}{L_{k+1} + \mu} + 
\\& \quad \quad   + \sum_{j = 1}^k \frac{\delta}{L_j + \mu} \prod_{i = j+1}^{k+1} \left ( 1 + \frac{\mu}{L_i} \right )^{-1}.
\end{aligned}
\end{equation}
\end{theorem}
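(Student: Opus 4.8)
The plan is to run the usual proximal/Mirror--Prox potential analysis, now in the relatively strongly monotone regime, so as to obtain a one-step contraction-with-error inequality for $V(z_*,z_k)$ and then unroll it.

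First I would record the first-order optimality conditions of the two proximal subproblems \eqref{wk_alg2} and \eqref{zk1_alg2}. Differentiating $V(\cdot,z)$ in its first argument and using the three-point identity $\langle \nabla d(b)-\nabla d(a),\,c-b\rangle = V(c,a)-V(c,b)-V(b,a)$, one gets, for every $u\in X$,
\[
\langle g(z_k),\,w_k-u\rangle \leqslant L_{k+1}\big(V(u,z_k)-V(u,w_k)-V(w_k,z_k)\big),
\]
\[
\langle g(w_k),\,z_{k+1}-u\rangle \leqslant L_{k+1}\big(V(u,z_k)-V(u,z_{k+1})-V(z_{k+1},z_k)\big)+\mu\big(V(u,w_k)-V(u,z_{k+1})-V(z_{k+1},w_k)\big).
\]
I would then take $u=z_{k+1}$ in the first inequality and $u=z_*$ in the second and add them: the $\pm L_{k+1}V(z_{k+1},z_k)$ terms cancel, and on the left-hand side, after adding and subtracting $\langle g(w_k),w_k-z_{k+1}\rangle$, one is left with $\langle g(w_k),w_k-z_*\rangle - \langle g(z_k)-g(w_k),z_{k+1}-w_k\rangle$.

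Next I would lower-bound this left-hand side. The cross term is controlled exactly by the adaptivity condition \eqref{eq_alg_2}, which gives $-\langle g(z_k)-g(w_k),z_{k+1}-w_k\rangle \geqslant -L_{k+1}\big(V(w_k,z_k)+V(z_{k+1},w_k)\big)-\delta$; for the other term I would use that $z_*$ solves \eqref{VI} and $g$ is monotone, hence $\langle g(z_*),w_k-z_*\rangle\geqslant 0$, together with $\mu$-relative strong monotonicity \eqref{eq:rel_str_mon} to obtain $\langle g(w_k),w_k-z_*\rangle\geqslant \mu V(w_k,z_*)+\mu V(z_*,w_k)$. Substituting both bounds, the terms $L_{k+1}V(w_k,z_k)$, $L_{k+1}V(z_{k+1},w_k)$ and $\mu V(z_*,w_k)$ cancel between the two sides, and discarding the remaining nonnegative quantities $\mu V(w_k,z_*)$ and $\mu V(z_{k+1},w_k)$ leaves
\[
(L_{k+1}+\mu)\,V(z_*,z_{k+1}) \leqslant L_{k+1}\,V(z_*,z_k) + \delta, \qquad\text{i.e.}\qquad V(z_*,z_{k+1}) \leqslant \Big(1+\tfrac{\mu}{L_{k+1}}\Big)^{-1}V(z_*,z_k) + \tfrac{\delta}{L_{k+1}+\mu}.
\]

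Finally I would unroll this linear recursion from $z_{k+1}$ down to $z_0$: the homogeneous part yields the factor $\prod_{i=1}^{k+1}\big(1+\mu/L_i\big)^{-1}$ in front of $V(z_*,z_0)$, and for each $j=1,\dots,k+1$ the error contribution $\tfrac{\delta}{L_j+\mu}$ is multiplied by $\prod_{i=j+1}^{k+1}\big(1+\mu/L_i\big)^{-1}$ (the empty product for $j=k+1$); separating the $j=k+1$ term gives precisely \eqref{estim_alg4}. The only delicate points are of bookkeeping type: getting the sign of the line-search inequality \eqref{eq_alg_2} right so that $\delta$ genuinely adds to the error, and noticing that the $\mu V(z_*,w_k)$ produced by relative strong monotonicity is exactly what absorbs the $\mu V(z_*,w_k)$ coming out of the proximal step for $z_{k+1}$ — without this cancellation the recursion would not close. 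Everything else is routine.
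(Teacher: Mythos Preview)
Your proposal is correct and follows essentially the same route as the paper's own proof: derive the two proximal optimality inequalities via the three-point identity, add them, use the adaptive line-search condition \eqref{eq_alg_2} to absorb the cross term, invoke the Stampacchia property of $z_*$ together with $\mu$-relative strong monotonicity to cancel the $\mu V(z_*,w_k)$ contribution, and then unroll the resulting one-step recursion $V(z_*,z_{k+1})\leqslant (1+\mu/L_{k+1})^{-1}V(z_*,z_k)+\delta/(L_{k+1}+\mu)$. The only cosmetic difference is that the paper drops the nonnegative term $\mu V(z_{k+1},w_k)$ already in the optimality condition for $z_{k+1}$, whereas you carry it and discard it at the end.
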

\begin{proof}
The proof is given in arXiv preprint \cite{arxiv_version}.
\end{proof}


\begin{algorithm}[htp]
\caption{Adaptive first-order method for variational inequalities with $\mu$-relatively strongly monotone and $L$-smooth operators.}
\label{alg1}
\begin{algorithmic}[1]
\REQUIRE $\varepsilon>0, x_0 \in X, L_0 > 0, \mu >0, d(x), V(x, z).$
\STATE Set $ z_0 = \argmin_{u\in X} d(u).$
 \FOR{$k\geqslant 0$}
\STATE Find smallest integer $i_k \geqslant  0$, such that
\begin{equation}\label{eq_alg}
\langle g(z_k) - g(w_k), z_{k+1} - w_k \rangle \leqslant L_{k+1} \left (V(w_k, z_k) + V(z_{k+1}, w_k) \right ),
\end{equation}
 where $L_{k+1} = 2^{i_k - 1} L_k$, and
\begin{equation}\label{wk_alg1}
    w_k = \argmin_{y \in X} \left \{ \left \langle \frac{1}{L_{k+1}} g(z_k), y \right \rangle  + V(y, z_k)\right \},
\end{equation}
\begin{equation}\label{zk1_alg1}
    z_{k+1} = \argmin_{z \in X} \left \{ \left \langle \frac{1}{L_{k+1}} g(w_k), z \right \rangle + V(z, z_k) + \frac{\mu}{L_{k+1}} V(z, w_k) \right \}.
\end{equation}
\ENDFOR
\ENSURE $z_k$.
\end{algorithmic}
\end{algorithm}


\if 0
For the proposed Algorithm \ref{alg1}, we have the following result.
\fi

\begin{corollary}\label{th3}
Let $g$ be a $\mu$-relatively strongly monotone operator, and $z_*$ be the exact solution of the variational inequality \eqref{VI}. Then for Algorithm \ref{alg1}, the following inequalities hold
\begin{equation}\label{estim_alg3}
V(z_*, z_{k+1}) \leqslant \prod_{i=0}^k\left ( 1 + \frac{\mu}{L_{i+1}} \right )^{-1}  V(z_*, z_0),
\end{equation}
\begin{equation}\label{eq1}
V(z_*, z_{k+1}) \leqslant \left ( 1 + \frac{\mu}{2 L} \right )^{-(k+1)} V(z_*, z_0).
\end{equation}
\end{corollary}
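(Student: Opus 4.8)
The quickest route is to note that Algorithm \ref{alg1} is nothing but Algorithm \ref{alg2} with the artificial inaccuracy set to zero: comparing the line-search test \eqref{eq_alg} with \eqref{eq_alg_2} and the proximal steps \eqref{wk_alg1}, \eqref{zk1_alg1} with \eqref{wk_alg2}, \eqref{zk1_alg2} shows they coincide once $\delta = 0$. Hence \eqref{estim_alg3} follows by putting $\delta = 0$ in the bound \eqref{estim_alg4} of Theorem \ref{th4} and reindexing the telescoping product via $\prod_{i=1}^{k+1}(1+\mu/L_i)^{-1} = \prod_{i=0}^{k}(1+\mu/L_{i+1})^{-1}$. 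If a self-contained derivation is wanted, I would reprove the one-step contraction directly: write the prox-optimality (variational) inequalities for $w_k$ and for $z_{k+1}$, turn every inner product $\langle \nabla d(u)-\nabla d(v), z-u\rangle$ into $V(z,v)-V(z,u)-V(u,v)$ by the three-point identity, add the $w_k$-inequality tested at $y=z_{k+1}$ to the $z_{k+1}$-inequality tested at $z=z_*$, and bound the cross term $\langle g(z_k)-g(w_k), z_{k+1}-w_k\rangle$ from above via the line-search condition \eqref{eq_alg}. After the Bregman terms cancel this leaves $L_{k+1}^{-1}\langle g(w_k), w_k-z_*\rangle \le V(z_*,z_k)-V(z_*,z_{k+1})+\mu L_{k+1}^{-1}\big(V(z_*,w_k)-V(z_*,z_{k+1})-V(z_{k+1},w_k)\big)$; invoking $\mu$-relative strong monotonicity \eqref{eq:rel_str_mon} at $(w_k,z_*)$ together with $\langle g(z_*), w_k-z_*\rangle \ge 0$ (the Stampacchia form of \eqref{VI}, equivalent to the Minty form for monotone $g$) gives $\langle g(w_k), w_k-z_*\rangle \ge \mu V(z_*,w_k)$, which cancels the $V(z_*,w_k)$ contributions and, after dropping the nonnegative $\mu L_{k+1}^{-1}V(z_{k+1},w_k)$, yields $(1+\mu/L_{k+1})\,V(z_*,z_{k+1}) \le V(z_*,z_k)$. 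Telescoping over $i=0,\dots,k$ gives \eqref{estim_alg3}.

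For \eqref{eq1} I would then show the adaptive stepsize never overshoots $2L$. Since $g$ is $L$-smooth, inequality \eqref{rel_smooth} applied with $x=z_{k+1}$, $z=w_k$, $y=z_k$ reads $\langle g(z_k)-g(w_k), z_{k+1}-w_k\rangle \le L\big(V(z_{k+1},w_k)+V(w_k,z_k)\big)$, so the test \eqref{eq_alg} holds automatically whenever $L_{k+1}\ge L$. Therefore the minimal admissible $i_k$ in $L_{k+1}=2^{i_k-1}L_k$ satisfies either $i_k=0$, in which case $L_{k+1}=L_k/2$, or $i_k\ge 1$, in which case the rejected value $2^{i_k-2}L_k$ must violate \eqref{eq_alg}, forcing $2^{i_k-2}L_k<L$ and hence $L_{k+1}<2L$; a one-line induction on $k$ with the natural initialization $L_0\le 2L$ then gives $L_{k+1}\le 2L$ for all $k\ge 0$. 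Plugging $L_{i+1}\le 2L$ into \eqref{estim_alg3}, and using that the product there has exactly $k+1$ factors, yields $V(z_*,z_{k+1})\le (1+\mu/(2L))^{-(k+1)}V(z_*,z_0)$.

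The only step requiring care is the bookkeeping in the combined prox-inequality --- tracking which $V(\cdot,\cdot)$ terms survive after adding the two optimality inequalities and applying \eqref{eq_alg}, and in particular recognizing that the extra proximal penalty $\frac{\mu}{L_{k+1}}V(\cdot,w_k)$ in the definition \eqref{zk1_alg1} of $z_{k+1}$ is precisely what promotes the usual nonexpansive estimate $V(z_*,z_{k+1})\le V(z_*,z_k)$ to the genuinely contractive one. The three-point identity, the passage to the Stampacchia form of the variational inequality, and the stepsize bound are all routine; and if one simply cites Theorem \ref{th4}, this obstacle disappears entirely.
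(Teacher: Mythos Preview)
Your proposal is correct and matches the paper's approach: the paper likewise derives the one-step contraction $V(z_*,z_{k+1})\le (1+\mu/L_{k+1})^{-1}V(z_*,z_k)$ by summing the two prox-optimality inequalities, applying the line-search test \eqref{eq_alg}, and then using $\langle g(w_k),w_k-z_*\rangle \ge \langle g(w_k)-g(z_*),w_k-z_*\rangle \ge \mu V(z_*,w_k)$; telescoping gives \eqref{estim_alg3}, and the bound $L_{i+1}\le 2L$ yields \eqref{eq1}. Your observation that the result is literally Theorem~\ref{th4} specialized to $\delta=0$ is exactly why it is stated as a Corollary, and your explicit justification of $L_{k+1}\le 2L$ (which the paper simply invokes) is a welcome addition; just note that it relies on the standard tacit assumption $L_0\le 2L$, which the algorithm description leaves implicit.
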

\begin{remark}
Due to \eqref{eq1}, the number of iterations of Algorithm \ref{alg1} for solving the problem \eqref{VI} does not exceed $N = \left \lceil \frac{2L+\mu}{\mu}\log_2\frac{R_0^2}{\varepsilon}\right \rceil$, which coincides with \eqref{num_iter_alg2} up to the multiplication by a constant in the case of $L\sim \mu\sim 1.$
\end{remark}

\begin{remark}
Taking into account \eqref{Lconst}, we find that the inequality \eqref{eq1}  has the following form
\begin{equation*}
V(z_*, z_{k+1})  
= \left ( \frac{2\widetilde{L}}{2\widetilde{L} + \mu} \right )^{k+1} V(z_*, z_0),
\end{equation*}

where $\widetilde{L}$ is given in \eqref{Lconst}.

\if 0

\begin{equation*}
    \begin{aligned}
    V(z_*, z_{k+1}) \leqslant  &\left ( 1 + \frac{1}{2 \left(\frac{2}{\delta}\right)^{\frac{1-\nu}{1+\nu}}\left(\frac{L_{xx}^{\frac{2}{1+\nu}}}{\mu_x} + \frac{L_{xy}^{\frac{2}{1+\nu}}}{\sqrt{\mu_x\mu_y}}+ \frac{L_{yy}^{\frac{2}{1+\nu}}}{\mu_y}\right)} \right )^{-(k+1)} V(z_*, z_0) + \\&
     \qquad \qquad \qquad \qquad  + 2^{\frac{2}{1+\nu}} \delta^{\frac{2 \nu}{1+\nu}} \left(\frac{L_{xx}^{\frac{2}{1+\nu}}}{\mu_x} + \frac{L_{xy}^{\frac{2}{1+\nu}}}{\sqrt{\mu_x\mu_y}}+ \frac{L_{yy}^{\frac{2}{1+\nu}}}{\mu_y}\right) + \delta.
    \end{aligned}
\end{equation*}
\fi
\end{remark}

The main difference between Algorithms \ref{alg2} and \ref{alg1} and the next Algorithm \ref{alg3} is a modified exit criterion, which leads to decreasing of the coefficient at $\delta$.

\begin{algorithm}[htp]
\caption{The third adaptive first-order method for variational inequalities with $\mu$-relatively strongly monotone operators.}
\label{alg3}
\begin{algorithmic}[1]
\REQUIRE $\varepsilon>0, \delta >0, x_0 \in X, L_0 > 0, \mu >0, d(x), V(x, z).$
\STATE Set $z_0 = \argmin_{u\in X} d(u).$
\FOR{$k \geqslant  0$}
\STATE Find the smallest integer  $i_k \geqslant  0$, such that
\begin{equation}\label{eq_alg_3}
\langle g(z_k) - g(w_k), z_{k+1} - w_k \rangle \leqslant L_{k+1} \left (V(w_k, z_k) + V(z_{k+1}, w_k) \right ) + L_{k+1} \delta,
\end{equation}
where $L_{k+1} = 2^{i_k - 1} L_k$, and
\begin{equation}\label{i1alg3}
    w_k = \argmin\limits_{y \in X} \left \{ \left \langle \frac{1}{L_{k+1}} g(z_k), y \right \rangle  + V(y, z_k)\right \},
\end{equation}
\begin{equation}\label{i2alg3}
    z_{k+1} = \argmin\limits_{z \in X} \left \{ \left \langle \frac{1}{L_{k+1}} g(w_k), z \right \rangle + V(z, z_k) + \frac{\mu}{L_{k+1}} V(z, w_k) \right \}.
\end{equation}
\ENDFOR
\ENSURE $z_k$.
\end{algorithmic}
\end{algorithm}

\if 0
\bigskip

For the proposed Algorithm \ref{alg3}, we have the following result.
\fi

\begin{theorem}\label{th5}
Let $g$ be a $\mu$-relatively strongly monotone operator, and $z_*$ be the exact solution of the variational inequality \eqref{VI}. Then for Algorithm \ref{alg3}, we have 
\begin{equation}\label{estim_alg5}
V(z_*, z_{k+1}) \leqslant \prod_{i=1}^{k+1} \left ( 1 + \frac{\mu}{L_{i}} \right )^{-1} V(z_*, z_0) + \delta \left (1 + \sum_{j=1}^k \prod_{i=j+1}^{k+1} \left ( 1 + \frac{\mu}{L_i} \right )^{-1} \right ),
\end{equation} 
and
\begin{equation}\label{ineq1}
V(z_*, z_{k+1}) \leqslant \left ( 1 + \frac{\mu}{2L} \right )^{-(k+1)} V(z_*, z_0) + \delta \left ( 1 + \frac{2L}{\mu} \right ).
\end{equation}
\end{theorem}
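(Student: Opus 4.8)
The plan is to adapt, almost verbatim, the argument behind Corollary~\ref{th3}, the only new feature being that the controlled inaccuracy in the exit test \eqref{eq_alg_3} now contributes an additive $\delta$ at every step; I then pass from the resulting sharp per-step recursion to the geometric estimate \eqref{ineq1}. First I would write the first-order optimality conditions for the two proximal subproblems \eqref{i1alg3}--\eqref{i2alg3}, using the Bregman three-point identity $\langle\nabla d(u)-\nabla d(v),\,y-u\rangle = V(y,v)-V(y,u)-V(u,v)$. Testing the condition for $w_k$ against $z_{k+1}$ yields $\tfrac{1}{L_{k+1}}\langle g(z_k),\,w_k-z_{k+1}\rangle \leqslant V(z_{k+1},z_k)-V(z_{k+1},w_k)-V(w_k,z_k)$; testing the condition for $z_{k+1}$ (whose objective carries the extra term $\tfrac{\mu}{L_{k+1}}V(\cdot,w_k)$) against $z_*$ and discarding the nonpositive term $-\tfrac{\mu}{L_{k+1}}V(z_{k+1},w_k)$ yields $\tfrac{1}{L_{k+1}}\langle g(w_k),\,z_{k+1}-z_*\rangle \leqslant V(z_*,z_k)-V(z_*,z_{k+1})-V(z_{k+1},z_k)+\tfrac{\mu}{L_{k+1}}\bigl(V(z_*,w_k)-V(z_*,z_{k+1})\bigr)$.

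Second, I would add these two inequalities, cancel the two $V(z_{k+1},z_k)$ terms, and substitute the exit test \eqref{eq_alg_3} divided by $L_{k+1}$ in the form $-\bigl(V(w_k,z_k)+V(z_{k+1},w_k)\bigr)\leqslant \delta-\tfrac{1}{L_{k+1}}\langle g(z_k)-g(w_k),\,z_{k+1}-w_k\rangle$ --- it is precisely the scaling of the inaccuracy as $L_{k+1}\delta$ that turns this into a clean $+\delta$ (whereas Algorithm~\ref{alg2} produces $+\delta/L_{k+1}$). After the cancellations of the remaining mixed divergences and of the inner products involving $g(z_k)$, exactly as in the proof of Corollary~\ref{th3}, one is left with
\[
\tfrac{1}{L_{k+1}}\bigl(\langle g(w_k),\,w_k-z_*\rangle-\mu V(z_*,w_k)\bigr)\;\leqslant\; \delta + V(z_*,z_k)-\bigl(1+\tfrac{\mu}{L_{k+1}}\bigr)V(z_*,z_{k+1}).
\]
Since $g$ is $\mu$-relatively strongly monotone \eqref{eq:rel_str_mon} and $z_*$ solves \eqref{VI} --- so that, by monotonicity, $\langle g(z_*),w_k-z_*\rangle\geqslant0$ and hence $\langle g(w_k),w_k-z_*\rangle\geqslant\langle g(w_k)-g(z_*),w_k-z_*\rangle\geqslant\mu V(z_*,w_k)$ --- the left-hand side is nonnegative, which gives the per-step contraction $V(z_*,z_{k+1})\leqslant\bigl(1+\tfrac{\mu}{L_{k+1}}\bigr)^{-1}\bigl(V(z_*,z_k)+\delta\bigr)$.

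Third, I would unroll this recursion for $i=0,\dots,k$ to get $V(z_*,z_{k+1})\leqslant\prod_{i=1}^{k+1}(1+\mu/L_i)^{-1}V(z_*,z_0)+\delta\sum_{j=0}^{k}\prod_{i=j+1}^{k+1}(1+\mu/L_i)^{-1}$, and then split off the $j=0$ summand and bound it by $\prod_{i=1}^{k+1}(1+\mu/L_i)^{-1}\leqslant1$ to recover exactly \eqref{estim_alg5}. Finally, for \eqref{ineq1} I would invoke --- as in the preceding theorems --- that the adaptive step-size search terminates with $L_{k+1}\leqslant 2L$, so $(1+\mu/L_{k+1})^{-1}\leqslant(1+\mu/(2L))^{-1}=\tfrac{2L}{2L+\mu}$; substituting into \eqref{estim_alg5} and summing the geometric series $\sum_{m\geqslant1}\bigl(\tfrac{2L}{2L+\mu}\bigr)^{m}=\tfrac{2L}{\mu}$ yields the claimed $(1+\mu/(2L))^{-(k+1)}V(z_*,z_0)+\delta(1+2L/\mu)$.

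The main obstacle is the bookkeeping in the second step: one must verify that the surviving $V(z_{k+1},w_k)$, $V(w_k,z_k)$ divergences and the $\langle g(z_k),\cdot\rangle$ inner products cancel exactly against the expansion of $\langle g(z_k)-g(w_k),z_{k+1}-w_k\rangle$ coming from \eqref{eq_alg_3}, leaving only $\langle g(w_k),w_k-z_*\rangle$, the two $V(z_*,\cdot)$ terms, and the additive $\delta$ --- the same cancellation pattern as in Corollary~\ref{th3}, the sole difference being that surviving $\delta$. A secondary point to record is that the backtracking in \eqref{eq_alg_3} is well-defined and terminates because $g$ is generalized relatively smooth \eqref{eq3} (so \eqref{eq_alg_3} already holds once $L_{k+1}\geqslant L$, assuming e.g. $L\geqslant1$ so that $L_{k+1}\delta\geqslant\delta$), which is also what justifies the bound $L_{k+1}\leqslant 2L$ used in deriving \eqref{ineq1}.
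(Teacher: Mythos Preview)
Your proposal is correct and follows essentially the same route as the paper's proof: derive the two Bregman three-point inequalities from the optimality conditions for $w_k$ and $z_{k+1}$, add them, substitute the exit test \eqref{eq_alg_3} (which, thanks to the $L_{k+1}\delta$ scaling, leaves a clean additive $\delta$), use $\langle g(w_k),w_k-z_*\rangle\geqslant\mu V(z_*,w_k)$ from relative strong monotonicity to obtain the per-step recursion, unroll, and then invoke $L_i\leqslant 2L$ for \eqref{ineq1}. The only cosmetic difference is that the paper first weakens the per-step bound to $V(z_*,z_{k+1})\leqslant(1+\mu/L_{k+1})^{-1}V(z_*,z_k)+\delta$ (equivalently, bounds $\tfrac{L_{k+1}\delta}{L_{k+1}+\mu}\leqslant\delta$) \emph{before} unrolling, whereas you unroll the slightly sharper $V(z_*,z_{k+1})\leqslant(1+\mu/L_{k+1})^{-1}(V(z_*,z_k)+\delta)$ and then bound the $j=0$ summand by $1$; both arrive at \eqref{estim_alg5}.
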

\begin{proof}
The proof is given in arXiv preprint \cite{arxiv_version}.
\end{proof}

\begin{remark}
If we iterate the inequality
$$
    V(z_*, z_{k+1}) \leqslant \left ( 1 + \frac{\mu}{L_{k+1}} \right )^{-1} V(z_*, z_{k}) + \frac{L_{k+1 }\delta}{L_{k+1} + \mu},
$$
then for the Algorithm \ref{alg3}, the following inequality holds
$$
    V(z_*, z_{k+1}) \leqslant \prod_{i=1}^{k+1} \left ( 1 + \frac{\mu}{L_{i}} \right )^{-1} V(z_*, z_0) + \frac{L_{k+1} \delta}{L_{k+1} + \mu} + \sum_{j=1}^k \frac{L_j \delta}{L_j + \mu} \prod_{i = j+1}^{k+1} \left ( 1 + \frac{\mu}{L_i}\right )^{-1}.
$$
\end{remark}

\if 0 
\begin{remark}
For the selected type of saddle point problems \eqref{mainmod} with \eqref{Lconst} and $\mu = 1$, we find that the inequality 
\begin{equation*}
V(z_*, z_{k+1}) \leqslant \left ( 1 + \frac{\mu}{2L} \right )^{-(k+1)} V(z_*, z_0) + \delta \left ( 1 + \frac{2L}{\mu} \right ),
\end{equation*}
has the following form
\begin{equation*}
    \begin{aligned}
    V(z_*, z_{k+1}) \leqslant  &\left ( 1 + \frac{1}{2 \left(\frac{2}{\delta}\right)^{\frac{1-\nu}{1+\nu}}\left(\frac{L_{xx}^{\frac{2}{1+\nu}}}{\mu_x} + \frac{L_{xy}^{\frac{2}{1+\nu}}}{\sqrt{\mu_x\mu_y}}+ \frac{L_{yy}^{\frac{2}{1+\nu}}}{\mu_y}\right)} \right )^{-(k+1)} V(z_*, z_0) + \\&
     \qquad \qquad \qquad \qquad  + 2^{\frac{2}{1+\nu}} \delta^{\frac{2 \nu}{1+\nu}} \left(\frac{L_{xx}^{\frac{2}{1+\nu}}}{\mu_x} + \frac{L_{xy}^{\frac{2}{1+\nu}}}{\sqrt{\mu_x\mu_y}}+ \frac{L_{yy}^{\frac{2}{1+\nu}}}{\mu_y}\right) + \delta.
    \end{aligned}
\end{equation*}
\end{remark}
\fi

\begin{remark}
For the selected type of saddle point problems \eqref{mainmod} with \eqref{Lconst} and $\mu = 1$, we find that the inequality 
\begin{equation*}
V(z_*, z_{k+1}) \leqslant \left ( 1 + \frac{\mu}{2L} \right )^{-(k+1)} V(z_*, z_0) + \delta \left ( 1 + \frac{2L}{\mu} \right ),
\end{equation*}
has the following form

\begin{equation*}
    V(z_*, z_{k+1}) \leqslant \left ( 1 + \frac{1}{2\widetilde{L}} \right )^{-(k+1)} V(z_*, z_0) + \widetilde{L}2^{\frac{2}{1-\nu}} \delta^{\frac{2 \nu(1-\nu)}{(1+\nu)^2}}  + \delta,
\end{equation*}
where $\widetilde{L}$ is given in \eqref{Lconst}.
\end{remark}

\section{Numerical Experiments}\label{sect_numerical}



\subsection{Saddle point problem for the smallest covering ball problem with functional constraints}

In this subsection, we consider an example of the Lagrange saddle point problem induced by a problem with geometrical nature, namely, an analogue of the well-known {\it smallest covering ball problem} with functional constraints. This example is equivalent to the following non-smooth convex optimization problem with functional constraints 
\begin{equation}\label{problem_smallest}
	\min_{x \in X} \left\{\psi(x):= \max_{1\leqslant k \leqslant s} \|x - A_k\|_2^2; \; \varphi_p(x) \leqslant 0, \; p=1,...,m \right\},
\end{equation}
where $A_k \in \mathbb{R}^n, k=1,...,s$ are given points and $X$ is a convex compact set. Functional constraints $\varphi_p$,  for $p=1,...,m$, have the following form
\begin{equation}\label{linear_constraints}
	\varphi_p(x):= \sum_{i=1}^n \alpha_{pi}x_i^2 -5, \quad p = 1, ..., m.
\end{equation}



See \cite{arxiv_version}, for more details about the setting of this problem and the setting of its connected conducted experiments with it. The coefficients $\alpha_{pi}$ in \eqref{linear_constraints} are drawn randomly from the following distributions.

\noindent
 \textbf{Case 1:} Pareto II or Lomax distribution with shape equalling $10$.
    
    \noindent
 \textbf{Case 2:} chi-square distribution, with a number of degrees of freedom, equals 3.



For case 1, the results of the work of Algorithms \ref{alg2}, \ref{alg1},  and \ref{alg3}  are presented in the Fig. \ref{fig1}, below.  These results demonstrate the theoretical estimate \eqref{estim_alg4},  \eqref{estim_alg3}  and \eqref{estim_alg5} for Algorithms \ref{alg2}, \ref{alg1}  and \ref{alg3}, respectively. Also, they demonstrate the value of the objective function in \eqref{problem_smallest} at the output point $x_N$ of the algorithm after performing $N$ iterations.
 
For case 2, we compare the work of Algorithms \ref{Alg:RUMP}, \ref{alg2}, \ref{alg1},  and \ref{alg3}.  
The results are presented in Fig. \ref{fig2}. They  demonstrate the theoretical estimate \eqref{estim_alg4}, \eqref{estim_alg3} and \eqref{estim_alg5} for Algorithms \ref{alg2}, \ref{alg1} and \ref{alg3}, respectively. Also, in the comparison with Algorithm \ref{Alg:RUMP}, these results demonstrate the value of the objective function at the output point $x_N$ of the algorithms after performing $N$ iterations. 

\begin{figure*}[t]
\centering
	\begin{minipage}{0.49\textwidth}
		\centering
		\includegraphics[width=1\textwidth]{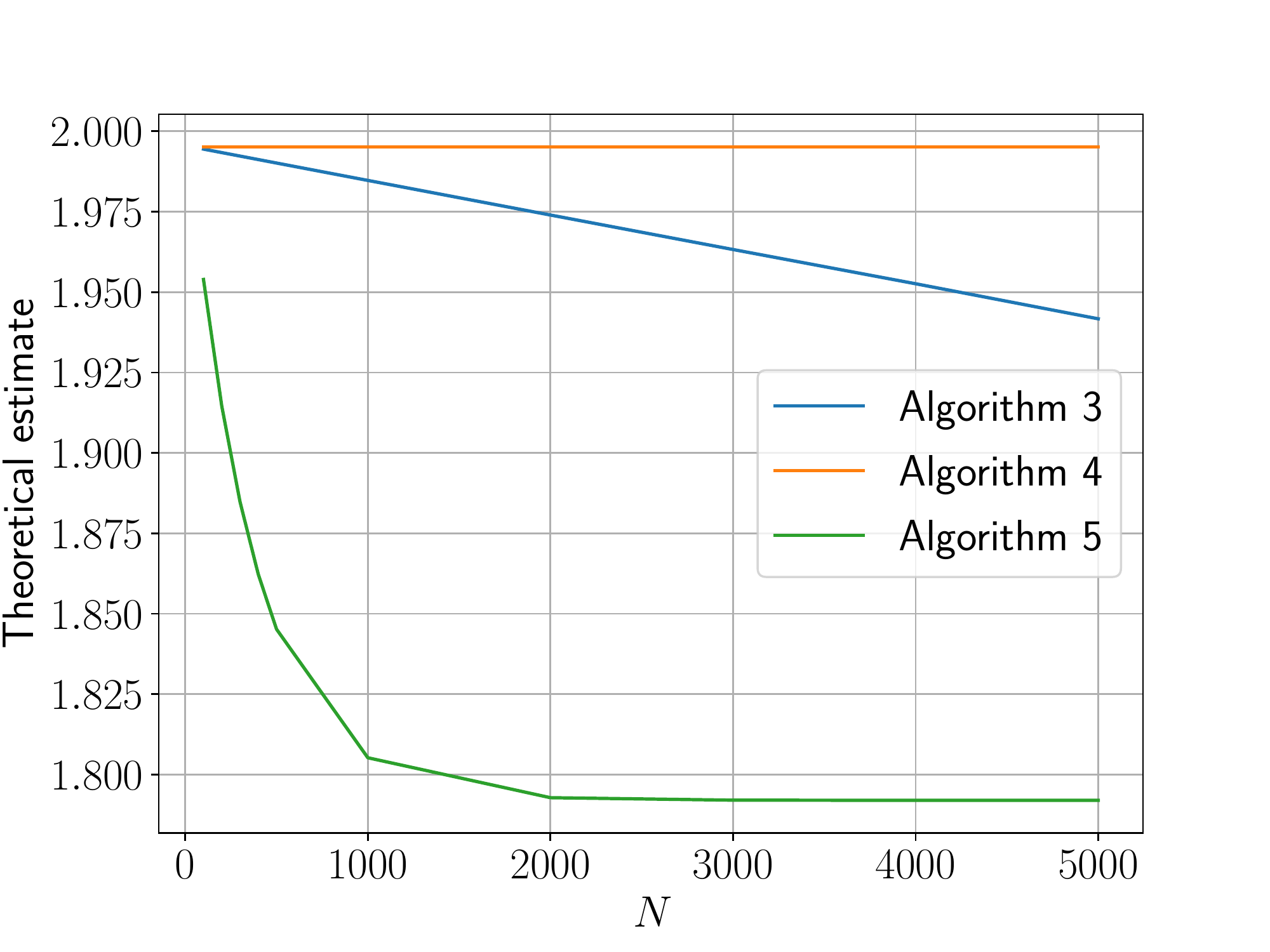}
	\end{minipage}
	\begin{minipage}{0.49\textwidth}
		\centering
		\includegraphics[width=1\textwidth]{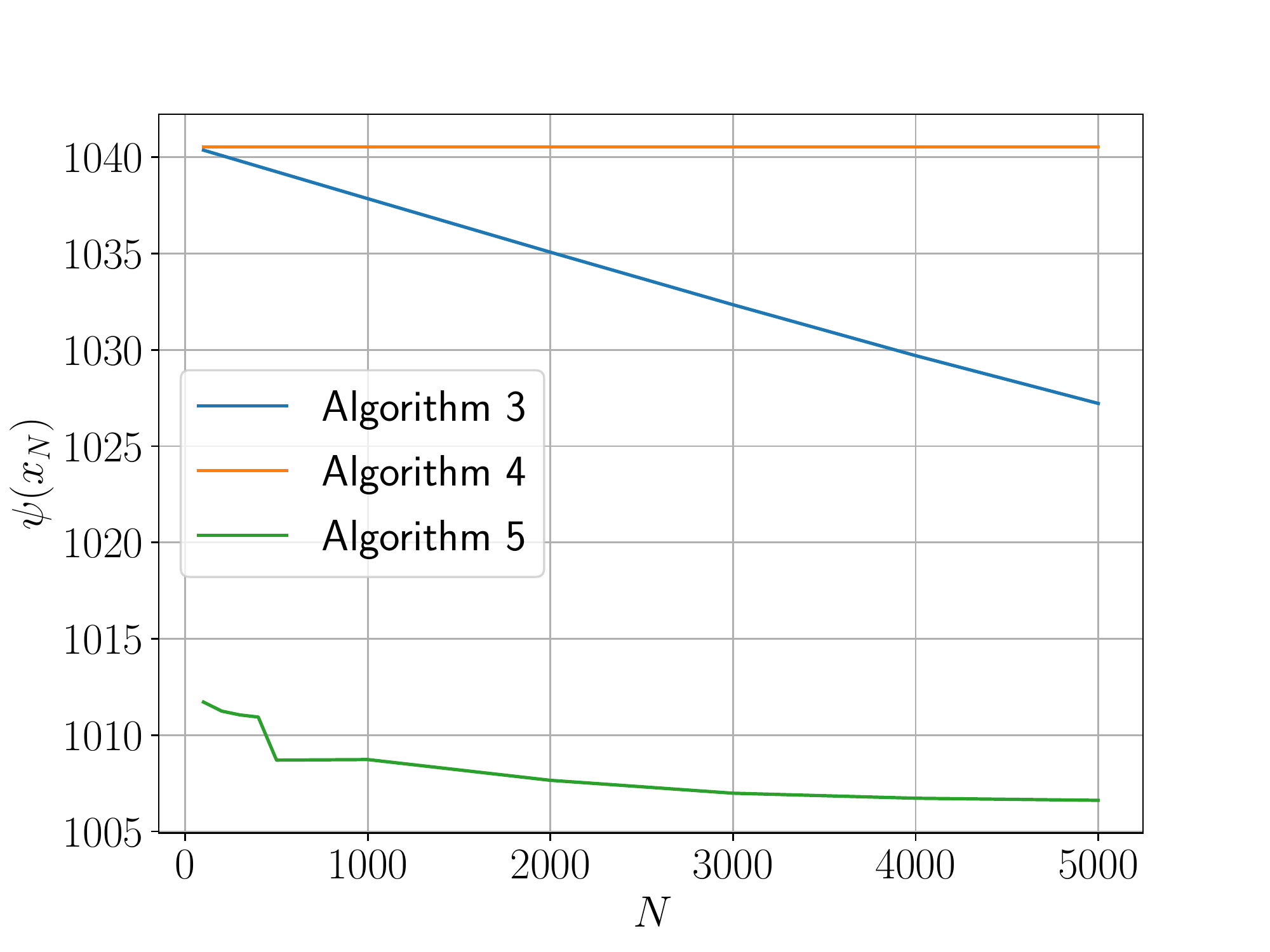}
	\end{minipage}
	\caption{\footnotesize  The results of Algorithms \ref{alg2}, \ref{alg1}   and \ref{alg3}, for case 1. }
	\label{fig1}
\end{figure*}

\begin{figure*}[t]
	\begin{minipage}{0.32\textwidth}
		\centering
		\includegraphics[width=1\textwidth]{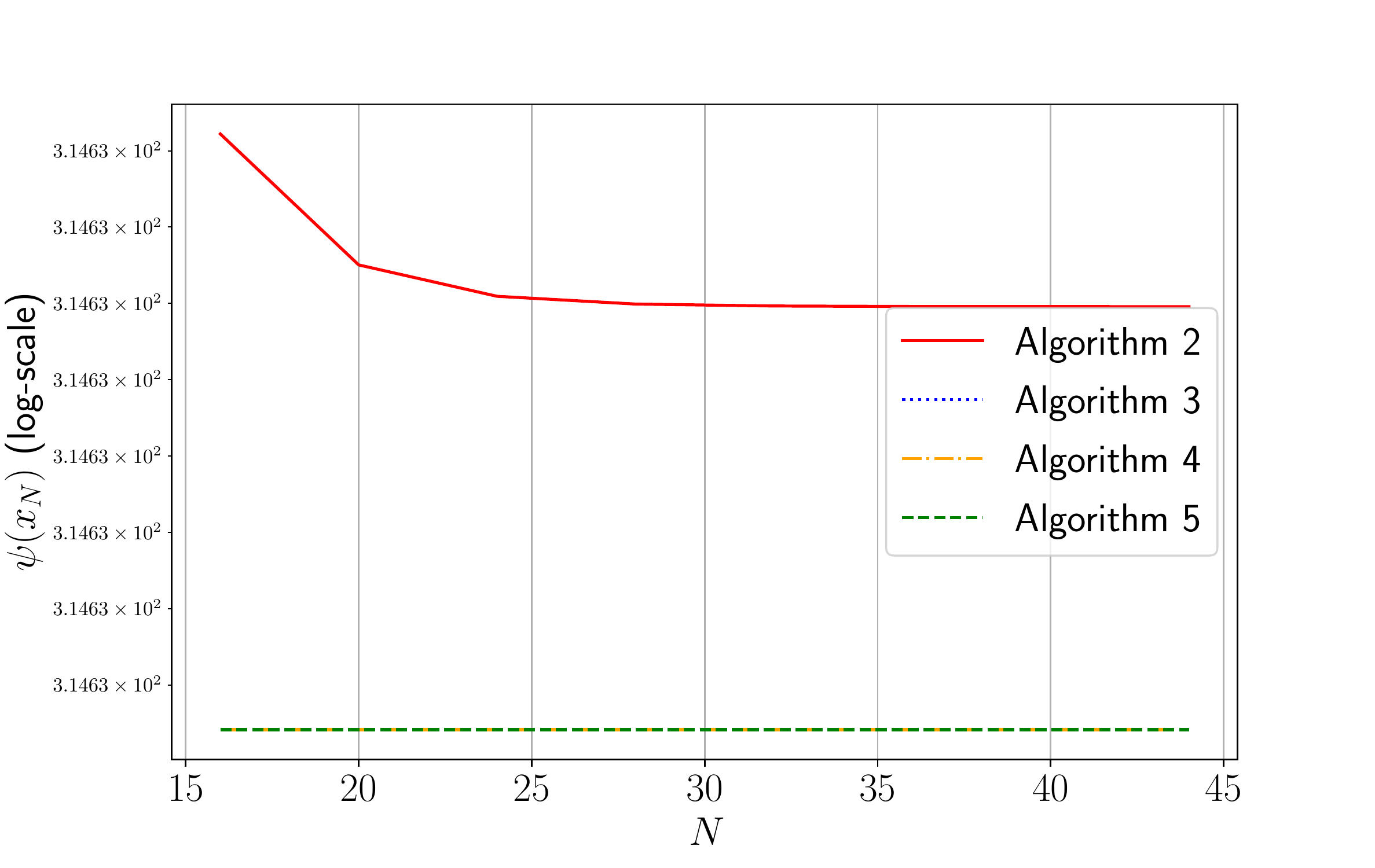}
	\end{minipage}
	\begin{minipage}{0.32\textwidth}
		\centering
		\includegraphics[width=1\textwidth]{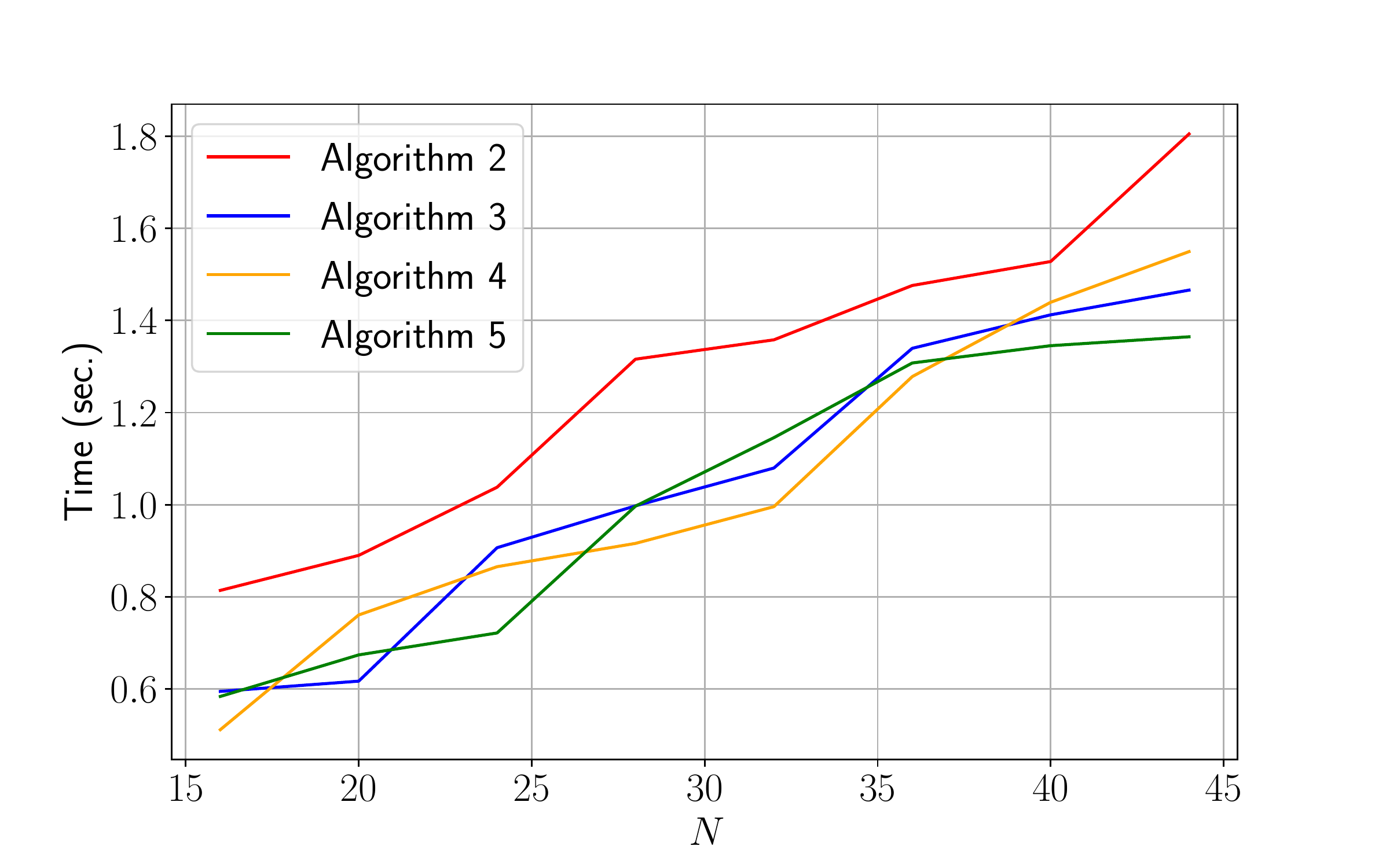}
	\end{minipage}
	\begin{minipage}{0.32\textwidth}
		\centering
		\includegraphics[width=1\textwidth]{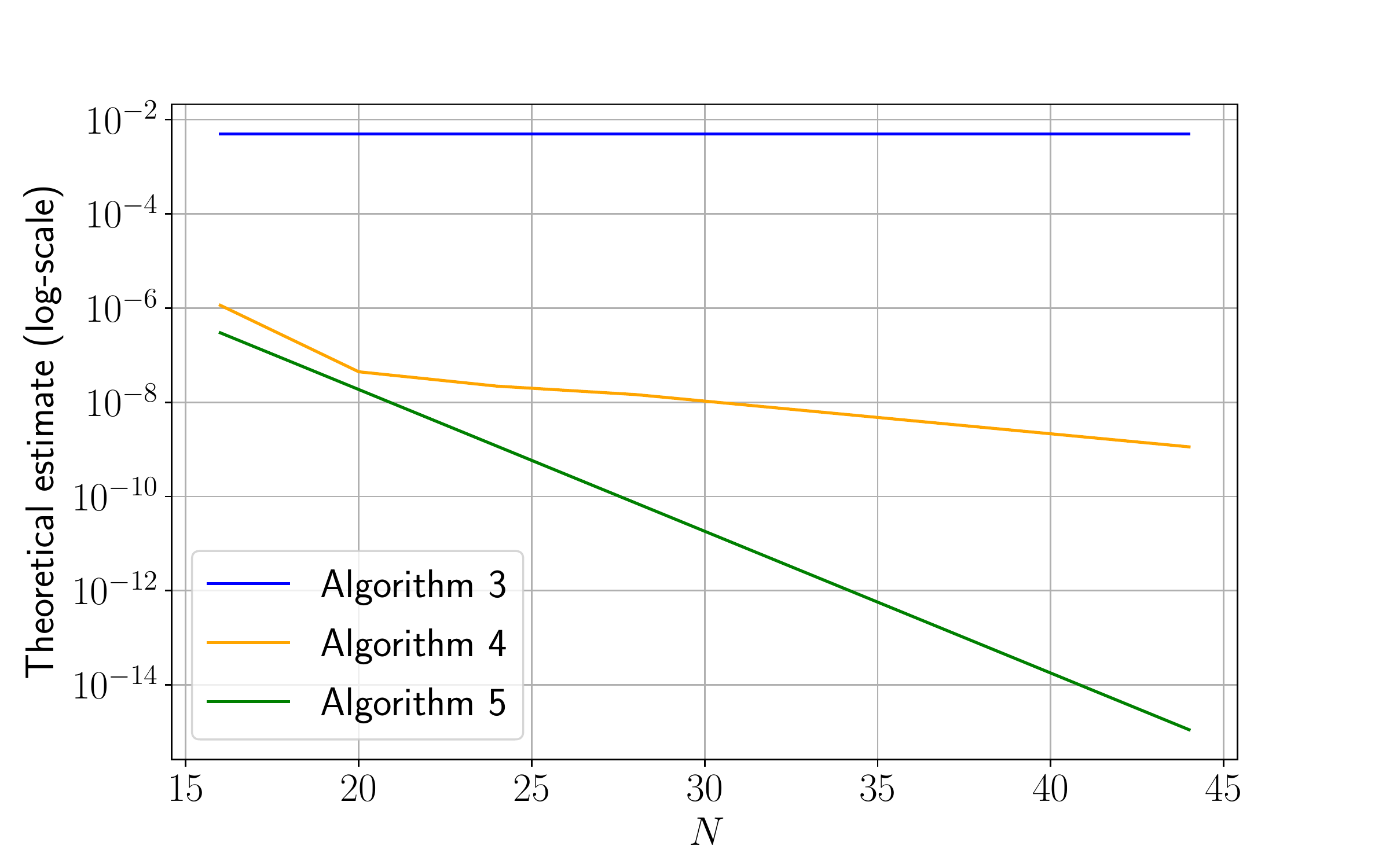}
	\end{minipage}
	\caption{\footnotesize  The results of Algorithms \ref{Alg:RUMP}, \ref{alg2}, \ref{alg1} and \ref{alg3}, for case 2.}
	\label{fig2}
\end{figure*}

From Fig. \ref{fig1}, we can see that the proposed Algorithm \ref{alg3} provides the best quality solution with respect to the value of the objective function {\color{black} $\psi$ at the output point $x_N$}, although {\color{black} the theoretical estimate of this quality} is not very high{\color{black}. Therefore} the efficiency of Algorithm \ref{alg3} is obvious when we look at the value of the objective function at the output point of compared algorithms.  

From Fig. \ref{fig2}, we can see that Algorithm \ref{alg3} also gives the best estimate of the quality of the solution. Also, Algorithms \ref{alg2}, \ref{alg1} and \ref{alg3} give the same objective values at the output points, with approximately the same running time. In this case, we see that Algorithm \ref{alg1} works better than Algorithm \ref{alg2} (it gives better estimate of the quality of the solution), not as in case 1. 

\subsection{One Minty Variational Inequality}

In this subsection we consider variational inequality with the following Lipschitz-continuous and strongly monotone operators (See \cite{arxiv_version}, for more details about the setting of this problem and the setting of its connected conducted experiments with it.).

The first operator is (see Example 5.2 in \cite{khanh2014modified})
    \begin{equation}\label{operator_numerical}
    g: X \subset \mathbb{R}^n \to \mathbb{R}^n, \quad g(x) = x.
    \end{equation}
    
The second operator is
\begin{equation}\label{second_operator}
    g: X \subset \mathbb{R}^n \to \mathbb{R}^n, \quad g(x_1, \ldots, x_n) = \left(x_1, 2^2x_2, 3^2 x_3, \ldots, n^2 x_n\right).
\end{equation}
This operator is $L$-Lipschitz continuous with $L = n^2$ and $\mu$-strongly monotone with $\mu =1$. The condition number for this operator is $\kappa = L/\mu {\color{black} = n^2}$, therefore this operator will be ill-conditioned when $n$ is relatively big. 

For the experiments with operator \eqref{operator_numerical}, the results are presented in Fig. \ref{fig_VI_norm_xout} and \ref{fig_VI_time}, which illustrate the norm $\|x_{\text{out}} - x_*\|_2$, and the running time in seconds as a function of iterations, where $x_{\text{out}}$ is the output of each algorithm. 

In Fig. \ref{fig_VI_norm_xout}, we can not see the graphics of the Algorithms \ref{alg2} and \ref{alg3}, that indicate the distance $\|x_{\text{out}} - x_*\|_2$, because by these algorithms this distance is equal to zero for all considered number of iterations. 

\begin{figure*}[t]
	\begin{minipage}{0.32\textwidth}
		\centering
		\includegraphics[width=1\textwidth]{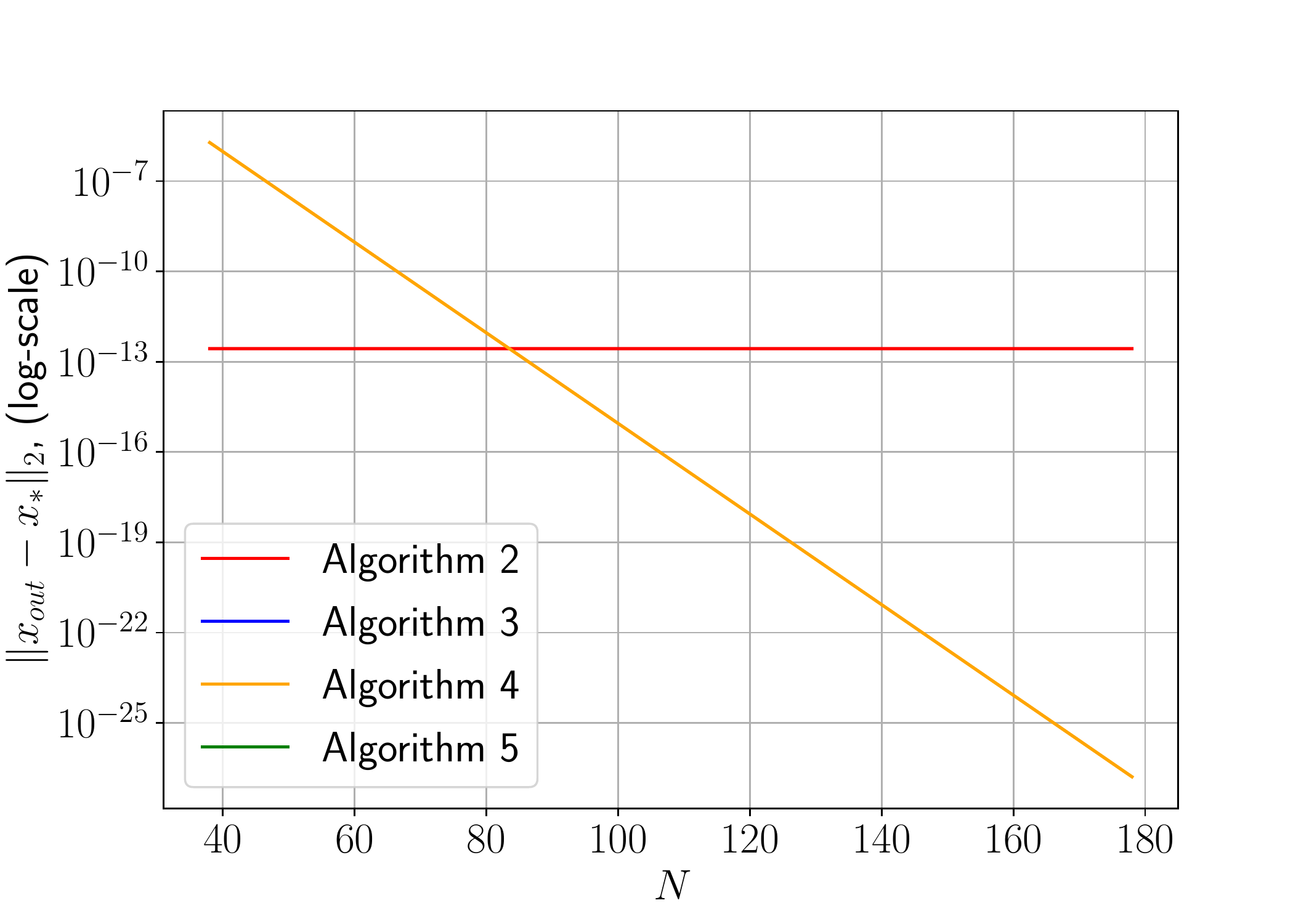}
		\vspace{-0.6cm}%
		{\footnotesize  $r = 1.$}
		\vspace{0.3cm}%
	\end{minipage}
	\begin{minipage}{0.32\textwidth}
		\centering
		\includegraphics[width=1\textwidth]{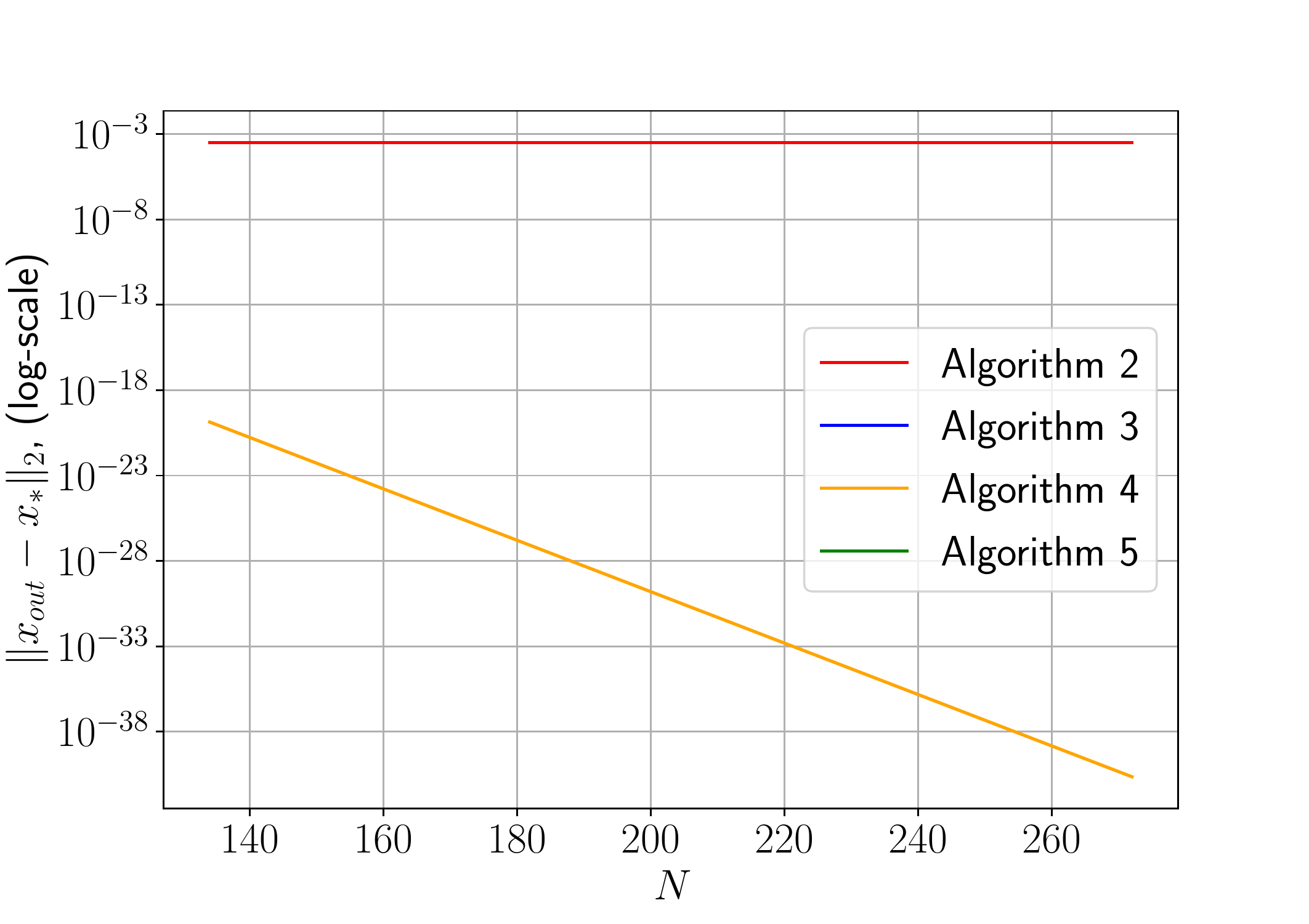}
		\vspace{-0.6cm}%
		{\footnotesize $r=2.$}
		\vspace{0.3cm}%
	\end{minipage}
	\begin{minipage}{0.32\textwidth}
		\centering
		\includegraphics[width=1\textwidth]{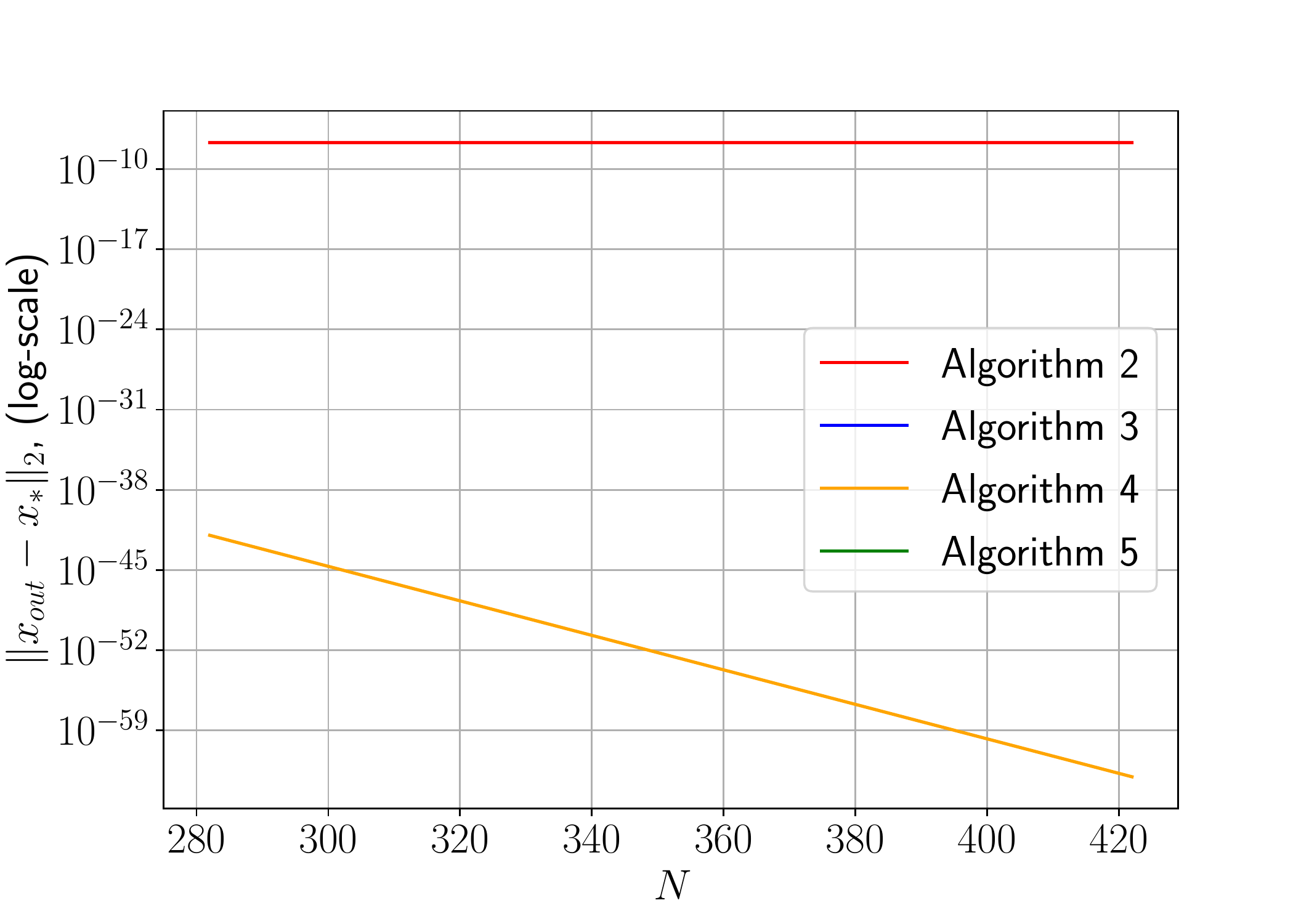}
		\vspace{-0.6cm}%
		{\footnotesize $r=3.$}
		\vspace{0.3cm}%
	\end{minipage}
	\caption{\footnotesize  The results of Algorithms \ref{Alg:RUMP}, \ref{alg1}, \ref{alg2} and \ref{alg3}, for operator \eqref{operator_numerical}, in the set $X = \{x \in \mathbb{R}^n, \|x\|_2 \leqslant r\}$  with different radii $r$ and $ n = 10^6, \varepsilon \in \{10^{-3i}, i = 1,2,\ldots, 8 \}, \delta = 0.01$. }
	\label{fig_VI_norm_xout}
\end{figure*}

\begin{figure*}[t]
	\begin{minipage}{0.32\textwidth}
		\centering
		\includegraphics[width=1\textwidth]{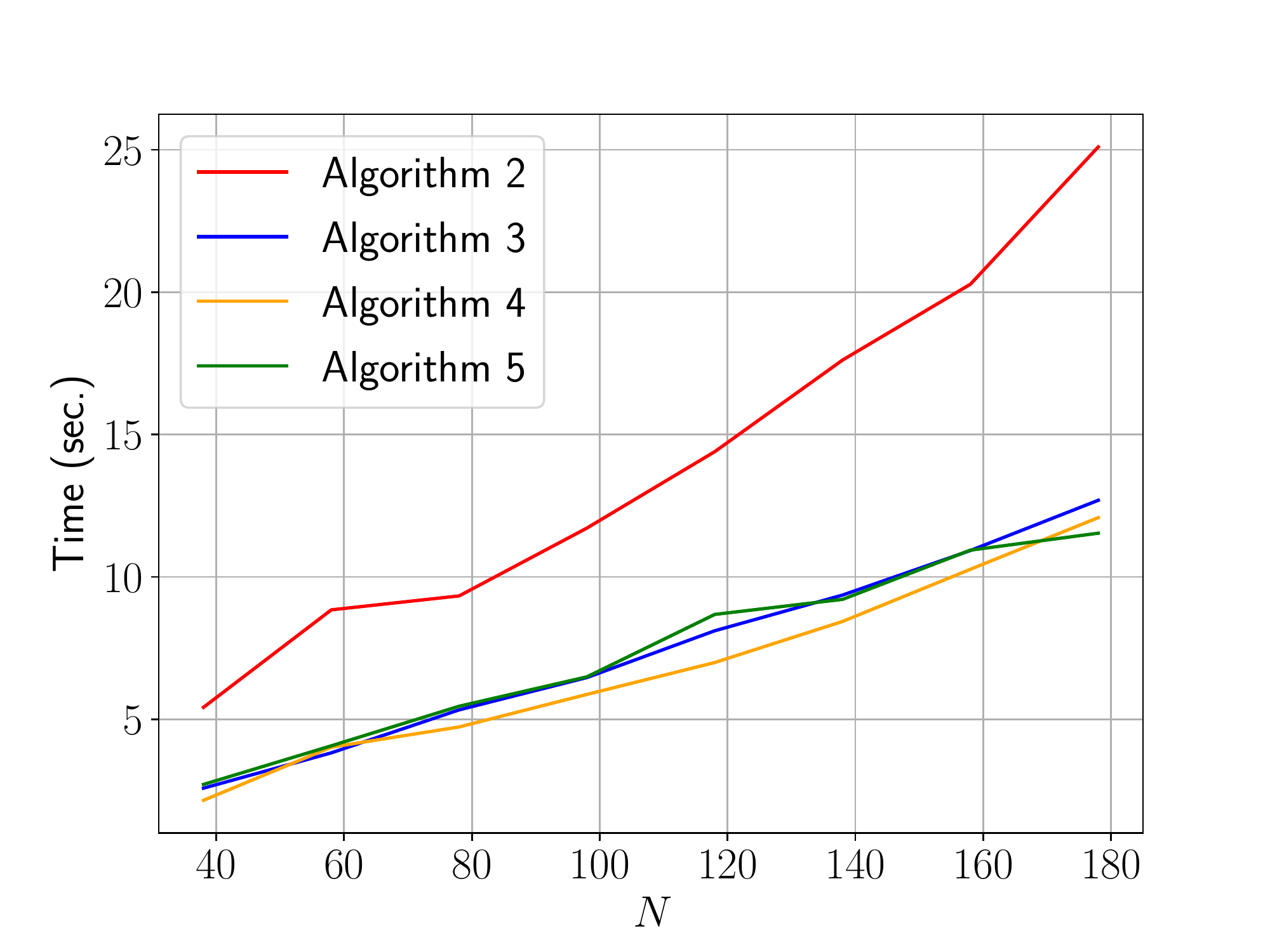}
		\vspace{-0.6cm}%
		{\footnotesize  $r = 1.$}
		\vspace{0.3cm}%
	\end{minipage}
	\begin{minipage}{0.32\textwidth}
		\centering
		\includegraphics[width=1\textwidth]{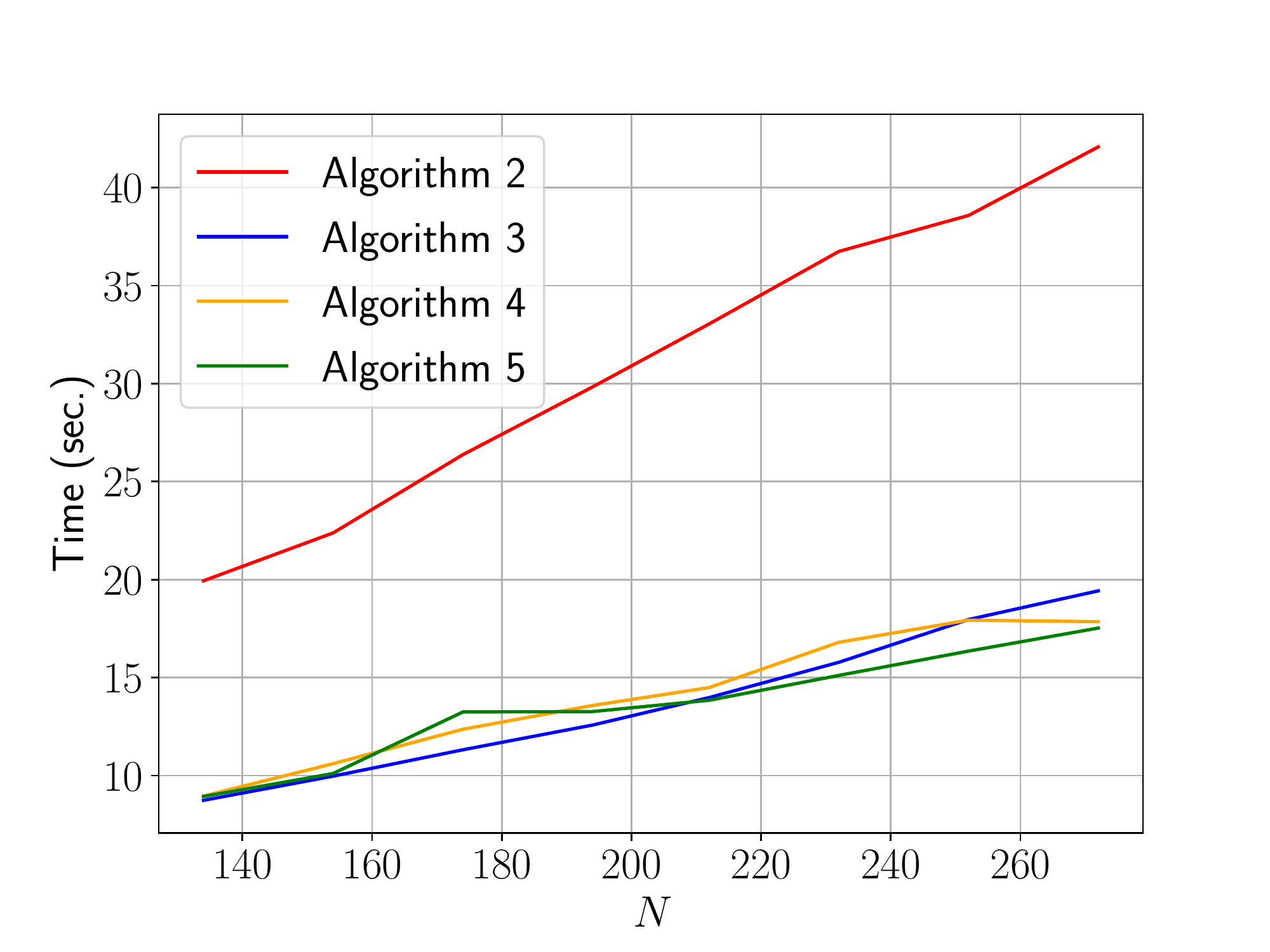}
		\vspace{-0.6cm}%
		{\footnotesize $r=2.$}
		\vspace{0.3cm}%
	\end{minipage}
	\begin{minipage}{0.32\textwidth}
		\centering
		\includegraphics[width=1\textwidth]{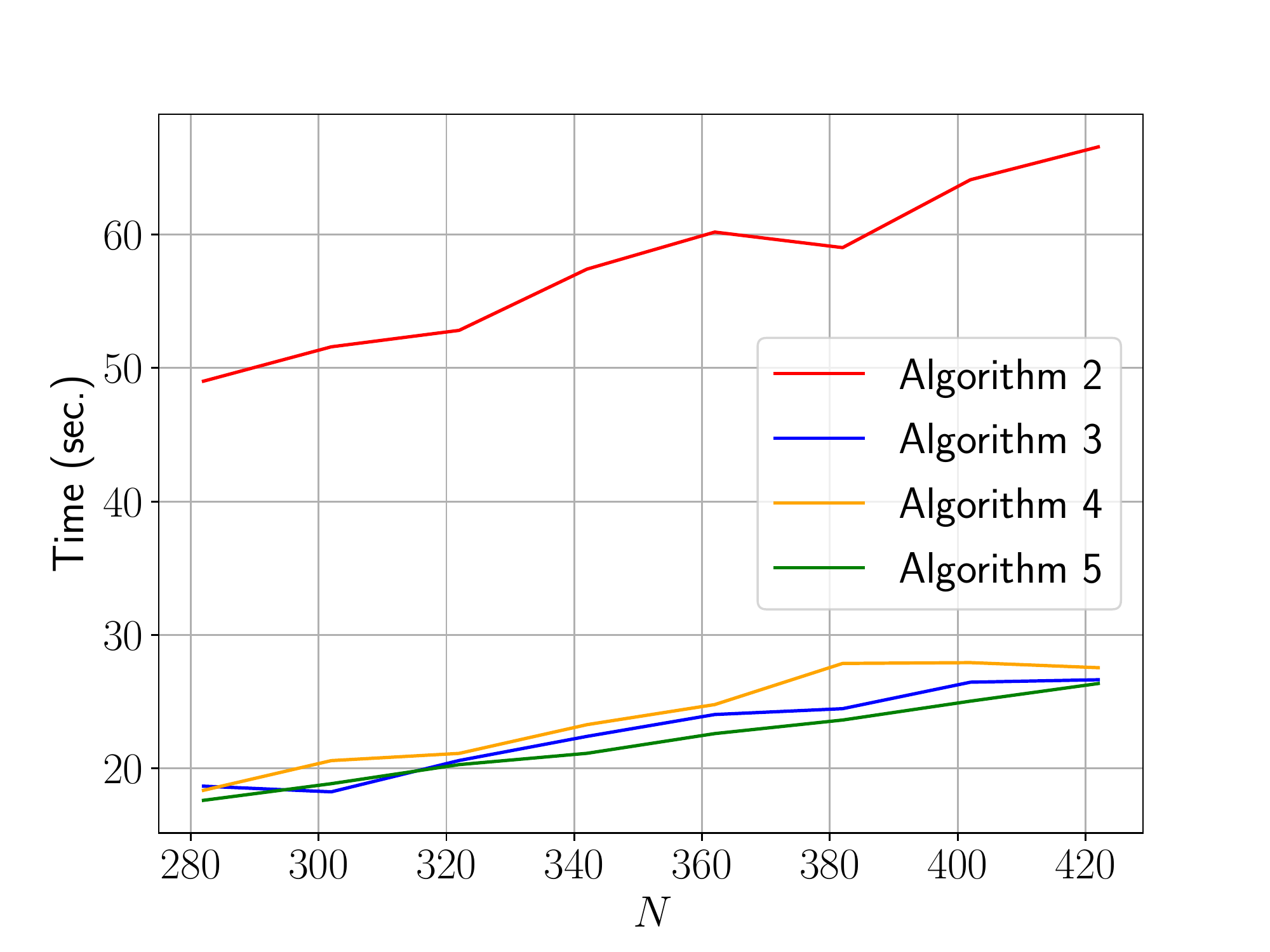}
		\vspace{-0.6cm}%
		{\footnotesize $r=3.$}
		\vspace{0.3cm}%
	\end{minipage}
	\caption{\footnotesize  The results of Algorithms \ref{Alg:RUMP}, \ref{alg1}, \ref{alg2} and \ref{alg3}, for operator \eqref{operator_numerical}, in the set $X = \{x \in \mathbb{R}^n, \|x\|_2 \leqslant r\}$  with different radii $r$ and $ n = 10^6, \varepsilon \in \{10^{-3i}, i = 1,2,\ldots, 8 \}, \delta = 0.01$. }
	\label{fig_VI_time}
\end{figure*}

From the conducted experiments (for operator \eqref{operator_numerical}), we can see that the shape of the feasible set very much affects the progress of the work of the proposed algorithms. We note that when we increase $r$ (the radius of the ball $X$), the corresponding running time of the compared algorithms is also increased. Also, from Fig. \ref{fig_VI_norm_xout} and \ref{fig_VI_time}, we can see that the proposed Algorithms \ref{alg2} and \ref{alg3}, for any value of the radius $r$,  are the best, where they give the solution of the problem under consideration with very high quality and at the same (approximately) running time. Also, we can see that Algorithm \ref{alg1} always works better than Algorithm \ref{Alg:RUMP}.  


Now, for the experiments with operator \eqref{second_operator}, 
the results are presented in Fig. \ref{res_second_operator}, which illustrates the norm $\|x_{\text{out}} - x_*\|_2$, the running time in seconds as a function of iterations, and the theoretical estimates \eqref{estimate_UMP}, \eqref{estim_alg4} and \eqref{estim_alg5} of the quality of the solution, for Algorithms \ref{Alg:UMP}, \ref{alg2} and \ref{alg3}, respectively.

\begin{figure*}[htp]
	\centering
	\begin{minipage}{0.32\textwidth}
		\centering
		\includegraphics[width=1\textwidth]{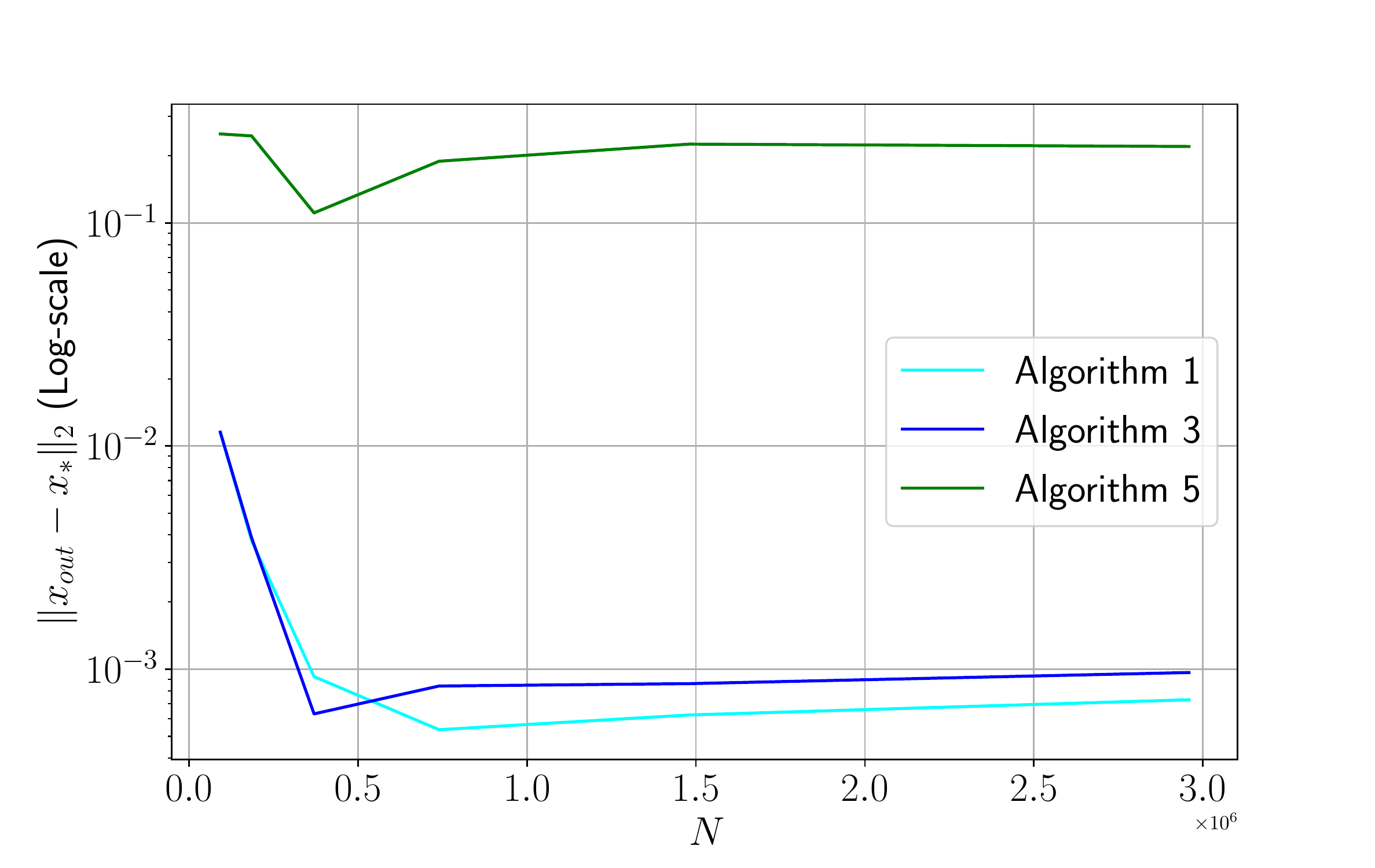}
		\vspace{-0.6cm}%
		\vspace{0.3cm}%
	\end{minipage}
	\centering
	\begin{minipage}{0.32\textwidth}
		\centering
		\includegraphics[width=1\textwidth]{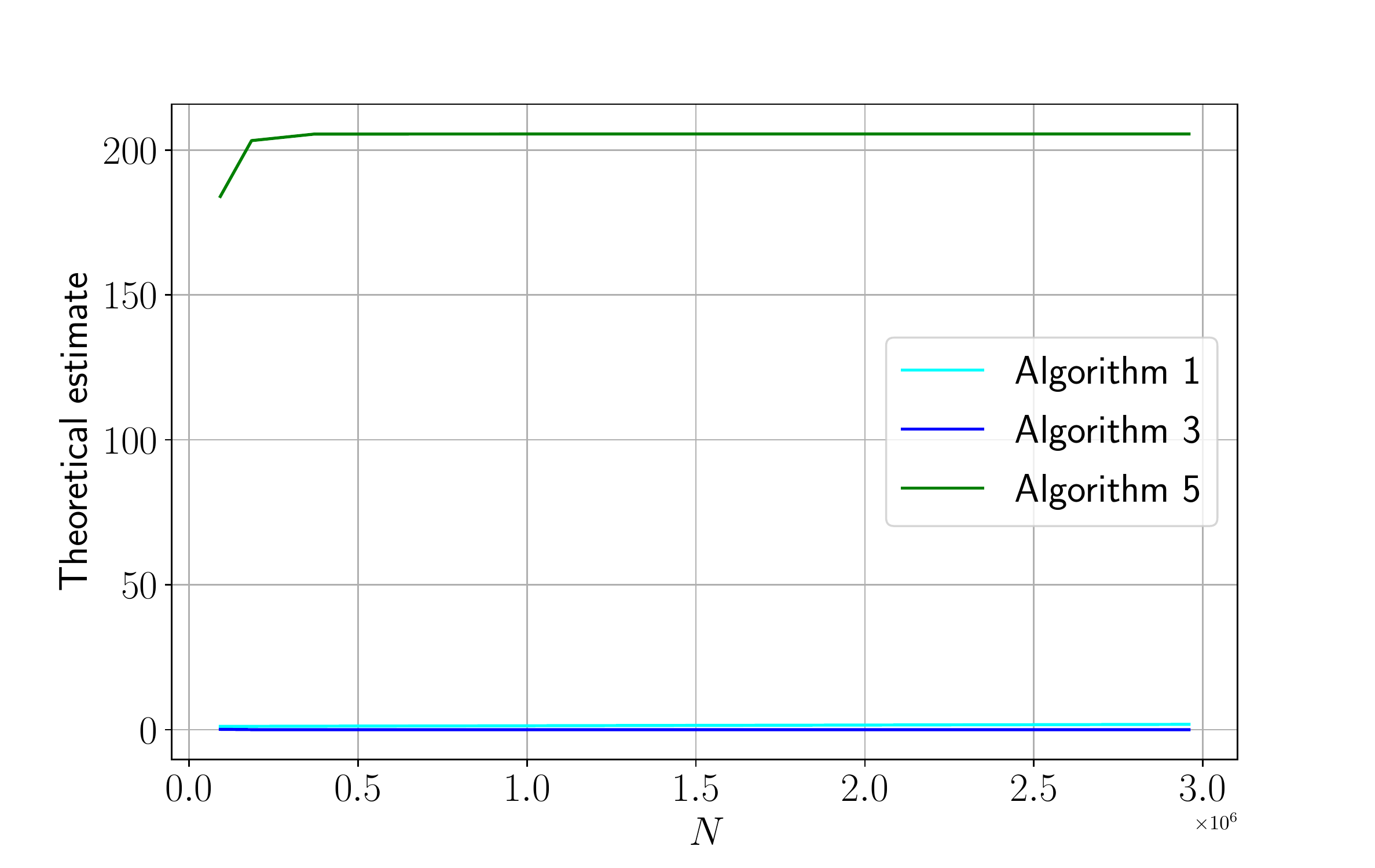}
		\vspace{-0.6cm}%
		\vspace{0.3cm}%
	\end{minipage}
	\centering
	\begin{minipage}{0.32\textwidth}
		\centering
		\includegraphics[width=1\textwidth]{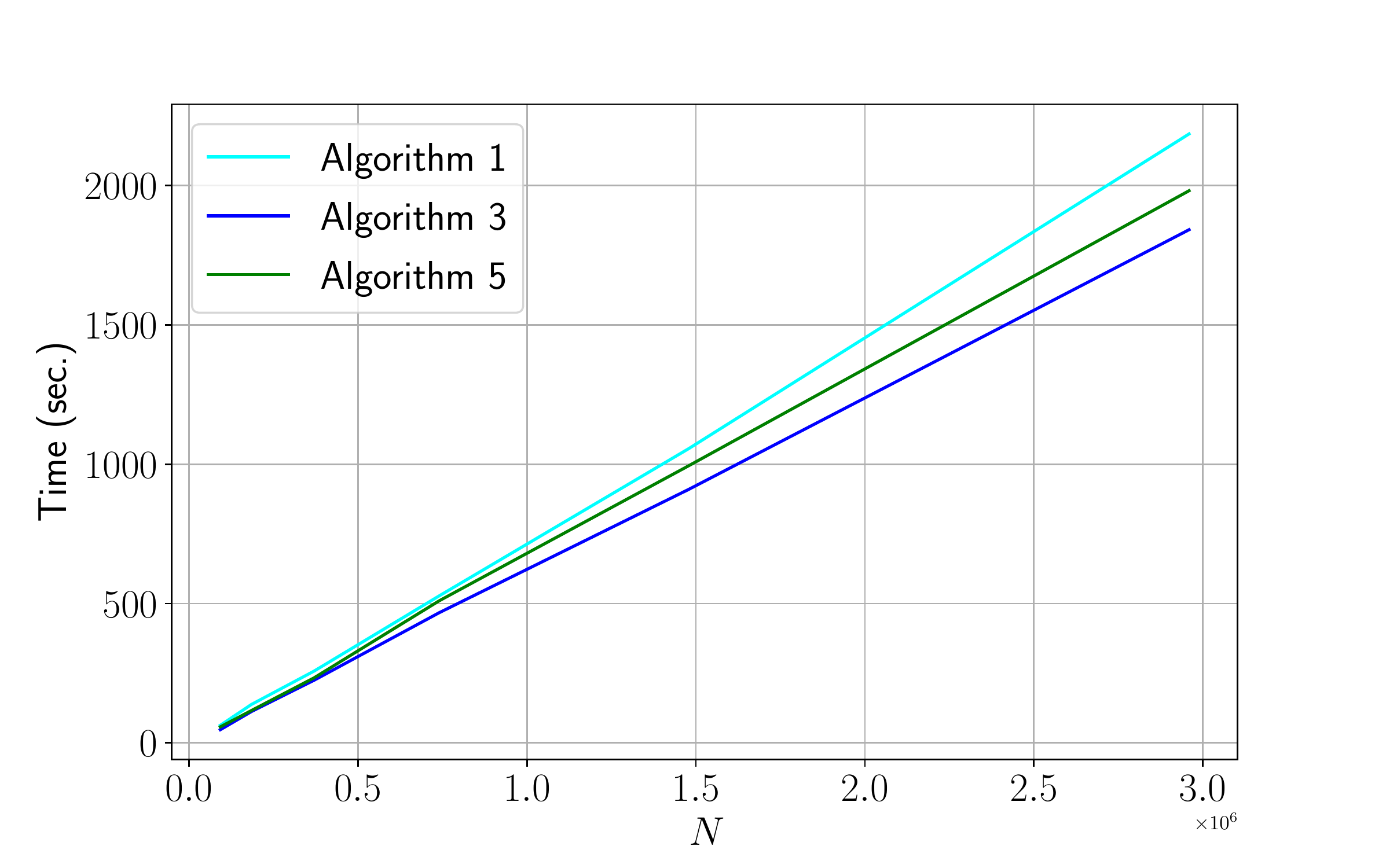}
		\vspace{-0.6cm}%
		\vspace{0.3cm}%
	\end{minipage}
	\caption{\footnotesize  The results of Algorithms \ref{Alg:UMP}, \ref{alg2}  and \ref{alg3}, for operator \eqref{second_operator}. }
	\label{res_second_operator}
\end{figure*}

From Fig. \ref{res_second_operator}, we can see that Algorithm \ref{alg3} is the worst. Algorithm \ref{Alg:UMP}, with respect to the distance $\|x_{\text{out}} - x_*\|_2$, gives results better than Algorithms \ref{alg2}. Also, the theoretical estimate of the quality of the solution by Algorithm \ref{Alg:UMP} is approximately the same as by Algorithm \ref{alg2}. The difference between the running times of Algorithms \ref{Alg:UMP} and \ref{alg2} is not big. Note that, Algorithm \ref{Alg:UMP} is applicable to a wider type of problem and can work better than Algorithms \ref{alg2}, \ref{alg1}  and \ref{alg3} with small $\mu$. At the same time for variational inequalities with strongly monotone operators, it can be restarted in such a way that Algorithm \ref{Alg:RUMP} will have similar convergence rate estimates. Thus, Algorithm \ref{Alg:RUMP} will be the best for the problems with an ill-conditioned operator.

\section*{Conclusion}

In this paper, we study adaptive first-order methods for variational inequalities from the class of relatively strongly monotone operators recently introduced in \cite{Stonyakin}. Our research is motivated, in particular, by the recently proposed technique \cite{Sid,Sidford} for strongly convex-concave saddle point problems, which allows one to obtain the complexity estimates of the accelerated methods.
First of all, the paper deals with the issue of adaptive tuning of the method to the global smoothness parameters of the saddle point problem. Moreover, an essential feature is the consideration of operators with a generalized condition of relative $L$-Lipschitz property and the corresponding generalizations of smoothness for the class of saddle point problems under consideration.
Based on the methods from \cite{Sid,Sidford}, 
we proposed algorithms for solving variational inequalities with relatively strongly monotone operators and obtained estimates of their convergence 
We also presented some numerical experiments, which demonstrate the effectiveness of the proposed methods. We considered an example of the convex optimization problem with functional constraints, and an example of the Minty variational inequality. The conducted experiments showed that the proposed Algorithms \ref{alg2}, \ref{alg1} and \ref{alg3} without using the technique of restarts work better than algorithm with restarts (Algorithm \ref{Alg:RUMP}) and vice versa for an example of the variational inequality with an ill-conditioned operator.

\end{document}